\newtheorem{theorem}{Theorem}[section]
\newtheorem{corollary}[theorem]{Corollary}
\theoremstyle{definition}
\theoremstyle{definition}
\newtheorem{example}[theorem]{Example}
\theoremstyle{remark}
\newtheorem{remark}[theorem]{Remark}
\numberwithin{equation}{section}
\begin{document}
\title{New error bounds for Legendre approximations of differentiable functions}
\author{Haiyong Wang\footnotemark[1]~\footnotemark[2]}
\date{}
\maketitle

\footnotetext[1]{School of Mathematics and Statistics, Huazhong
University of Science and Technology, Wuhan 430074, P. R. China.
E-mail: \texttt{haiyongwang@hust.edu.cn}}

\footnotetext[2]{Hubei Key Laboratory of Engineering Modeling and
Scientific Computing, Huazhong University of Science and Technology,
Wuhan 430074, China.}

\begin{abstract}
In this paper we present a new perspective on error analysis for
Legendre approximations of differentiable functions. We start by
introducing a sequence of Legendre-Gauss-Lobatto polynomials and
prove their theoretical properties, including an explicit and
optimal upper bound. We then apply these properties to derive a new
explicit bound for the Legendre coefficients of differentiable
functions. Building on this, we establish an explicit and optimal
error bound for Legendre approximations in the $L^2$ norm and an
explicit and optimal error bound for Legendre approximations in the
$L^{\infty}$ norm under the condition that their maximum error is
attained in the interior of the interval. Illustrative examples are
provided to demonstrate the sharpness of our new results.
\end{abstract}

{\bf Keywords:} Legendre approximations, differentiable functions,
Legendre coefficients, Legendre-Gauss-Lobatto functions, optimal
convergence rates.

\vspace{0.05in}

{\bf AMS classifications:} 41A25, 41A10

\section{Introduction}\label{sec:introduction}
Legendre approximations are one of the most fundamental methods in
the field of scientific computing, such as Gauss-type quadrature,
finite element and spectral methods for the numerical solution of
differential equations (see, e.g.,
\cite{canuto2006spectral,ern2021,gautschi2004Orth,shen2011spectral}).
One of the most remarkable advantages of Legendre approximations is
that their accuracy depends solely upon the smoothness of the
underlying functions. From both theoretical and applied
perspectives, it is of particular importance to study error
estimates of various Legendre approximation methods, such as
Legendre projection and interpolation.

Over the past one hundred years, there has been a continuing
interest in developing error estimates and error bounds for
classical spectral approximations (i.e., Chebyshev, Legendre,
Jacobi, Laguerre and Hermite approximations) and many results can be
found in monographes on approximation theories, orthogonal
polynomials and spectral methods (see, e.g.,
\cite{canuto2006spectral,davis1975interp,Jackson1930,Mason2003,shen2011spectral,szego1975orth,trefethen2019}).
However, the existing results have several typical drawbacks: (i)
optimal error estimates of Laguerre and Hermite approximations are
far from satisfactory; (ii) error estimates and error bounds for
Legendre and, more generally, Gegenbauer and Jacobi approximations
of differentiable functions might be suboptimal. Indeed, for the
former, little literature is available on optimal error estimates or
sharp error bounds for Laguerre and Hermite interpolation. For the
latter, even for the very simple function $f(x)=|x|$, the error
bounds of its Legendre approximation of degree $n$ in the maximum
norm in \cite{liu2020legendre,wang2012legendre,wang2018new} are
suboptimal since they all behave like $O(n^{-1/2})$ as
$n\rightarrow\infty$, while the actual convergence rate is
$O(n^{-1})$ (see \cite[Theorem~3]{wang2021legendre}). In view of
these drawbacks, optimal error estimates and sharp error bounds for
classical spectral approximations have received renewed interest in
recent years. We refer to the monograph \cite{trefethen2019} and the
literature
\cite{babuska2019,liu2020legendre,wang2012legendre,wang2016gegenbauer,wang2018new,wang2021legendre,wang2021gegenbauer,wang2021cheby,xiang2020jacobi,xie2013exp}
for more detailed discussions.

Let $\Omega=[-1,1]$ and let $P_k(x)$ be the Legendre polynomial of
degree $k$. It is well known that the sequence
$\{P_k(x)\}_{k=0}^{\infty}$ forms a complete orthogonal system on
$\Omega$ and
\begin{align}\label{def:LegOrth}
\int_{\Omega} P_j(x) P_k(x) \mathrm{d}x = \left(k + \frac{1}{2}
\right)^{-1} \delta_{j,k},
\end{align}
where $\delta_{j,k}$ is the Kronecker delta. For any
$f\in{L}^2(\Omega)$, its Legendre projection of degree $n$ is
defined by
\begin{align}\label{eq:LegExp}
f_n(x) = \sum_{k=0}^{n} a_k P_k(x), \quad  a_k = \left(k +
\frac{1}{2} \right) \int_{-1}^{1} f(x) P_k(x) \mathrm{d}x.
\end{align}
In order to analyze the error estimate of Legendre projections in
both the $L^2$ and $L^{\infty}$ norms, sharp estimates of the
Legendre coefficients play an important role in the analysis (see,
e.g.,
\cite{liu2020legendre,wang2012legendre,wang2016gegenbauer,wang2018new,wang2021legendre,wang2021cheby,xiang2020jacobi}).
Indeed, these estimates are useful not only in understanding the
convergence rates of Legendre projections but useful also in
estimating the degree of the Legendre projection to approximate
$f(x)$ within a prescribed accuracy. In recent years, sharp
estimates of the Legendre coefficients have experienced rapid
development. In the case when $f(x)$ is analytic inside and on the
Bernstein ellipse
\begin{align}
\mathcal{E}_{\rho} = \left\{z\in \mathbb{C} ~\bigg| ~z = \frac{u +
u^{-1}}{2}, ~ u=\rho e^{\mathrm{i}\theta},~ 0\leq\theta<2\pi
\right\},
\end{align}
for some $\rho>1$, an explicit and sharp bound for the Legendre
coefficients was given in \cite[Lemma~2]{wang2021legendre}
\begin{align}\label{eq:akBound}
|a_0| \leq  \frac{D(\rho)}{2}, \qquad |a_k|\leq
D(\rho)\frac{\sqrt{k}}{\rho^k}, \quad k\geq1,
\end{align}
where $D(\rho)=2M(\rho)L(\mathcal{E}_{\rho})/(\pi\sqrt{\rho^2-1})$
and $M(\rho)=\max_{z\in\mathcal{E}_{\rho}} |f(z)|$ and
$L(\mathcal{E}_{\rho})$ is the length of the circumference of
$\mathcal{E}_{\rho}$. As a direct consequence, for each $n\geq0$, it
was proved in \cite[Theorem~2]{wang2021legendre} that
\begin{align}\label{eq:LegMaxError}
\|f-f_n\|_{L^{\infty}(\Omega)} &\leq \frac{D(\rho)}{\rho^n} \bigg(
\frac{(n+1)^{1/2}}{\rho-1} + \frac{(n+1)^{-1/2}}{(\rho-1)^2} \bigg).
\end{align}
Moreover, another direct consequence of \eqref{eq:akBound} is the
error bound of Legendre projection in the $L^2$ norm:
\begin{align}\label{eq:LegMean}
\|f-f_n\|_{L^{2}(\Omega)} &\leq %\left[ \sum_{k=n+1}^{\infty} (a_k)^2
%\left(k + \frac{1}{2}\right)^{-1} \right]^{1/2} \leq
\left( \sum_{k=n+1}^{\infty} \frac{D(\rho)^2}{\rho^{2k}}
\right)^{1/2} = \frac{D(\rho)}{\rho^n\sqrt{\rho^2-1}}.
\end{align}
In the case when $f(x)$ is differentiable but not analytic on the
interval $\Omega$, upper bounds for the Legendre coefficients were
extensively studied in
\cite{liu2020legendre,wang2012legendre,wang2018new,xiang2020jacobi}.
However, those bounds in \cite{wang2012legendre,wang2018new} depend
on some semi-norms of high-order derivatives of $f(x)$ which may be % largely
overestimated, especially when the singularity is close to
endpoints, and those bounds in
\cite{liu2020legendre,xiang2020jacobi} involve ratios of gamma
functions, which are less favorable since their asymptotic behavior
and computation still require further treatment.

In this paper, we give a new perspective on error bounds of Legendre
approximations for differentiable functions. We start by introducing
the Legendre-Gauss-Lobatto (LGL) polynomials
\begin{align}\label{def:PhiFun}
\phi_k^{\mathrm{LGL}}(x) = \left\{
\begin{array}{ll}
P_{k+1}(x) - P_{k-1}(x), & \hbox{$k\geq1$,}   \\[6pt]
P_1(x),           & \hbox{$k=0$,}
            \end{array}
            \right.
\end{align}
and then proving theoretical properties of these polynomials,
including the differential recurrence relation and an optimal and
explicit bound for their maximum value on $\Omega$. Based on LGL
polynomials, we obtain a new explicit and sharp bound for the
Legendre coefficients of differentiable functions, which is sharper
than the result in \cite{wang2018new} and is more informative than
the results in \cite{liu2020legendre,xiang2020jacobi}. Building on
this new bound, we then establish an explicit and optimal error
bounds for Legendre projections in the $L^2$ norm and an explicit
and optimal error bound for Legendre projections in the $L^\infty$
norm whenever the maximum error of $f_n$ is attained in the interior
of $\Omega$. We emphasize that in contrast to those results in
\cite{liu2020legendre,xiang2020jacobi} which involve ratios of gamma
functions, our results are more explicit and informative.

This paper is organized as follows. In section \ref{sec:properties}
we prove some theoretical properties of the LGL polynomials. In
section \ref{sec:coefficient} we establish a new bound for the
Legendre coefficients of differentiable functions, which improves
the existing result in \cite{wang2018new}. Building on this bound,
we establish some new error bounds of Legendre projections in both
$L^2$ and $L^{\infty}$ norms. In section \ref{sec:extension} we
present two extensions, including the extension of LGL polynomials
to Gegenbauer-Gauss-Lobatto functions and optimal convergence rates
of LGL interpolation and differentiation for analytic functions.
Finally, we give some concluding remarks in section
\ref{sec:Conclusion}.

%--------------------------------------------------------------------------------------
\section{Properties of Legendre-Gauss-Lobatto polynomials}\label{sec:properties}
In this section, we establish some theoretical properties of LGL
polynomials $\{\phi_k^{\mathrm{LGL}}\}$. Our main result is stated
in the following theorem.
\begin{theorem}\label{thm:PhiFun}
Let $\phi_k^{\mathrm{LGL}}(x)$ be defined in \eqref{def:PhiFun}.
Then, the following properties hold:
\begin{itemize}
\item[\rm (i)] $|\phi_k^{\mathrm{LGL}}(x)|$ is even for $k=0,1,\ldots$ and $\phi_k^{\mathrm{LGL}}(\pm1)=0$ for $k=1,2,\ldots$.

\item[\rm (ii)] The following differential recurrence relation holds
\begin{align}\label{eq:LegDiffRec}
\phi_k^{\mathrm{LGL}}(x) = \frac{\mathrm{d}}{\mathrm{d}x} \left(
\frac{\phi_{k+1}^{\mathrm{LGL}}(x)}{2k+3} -
\frac{\phi_{k-1}^{\mathrm{LGL}}(x)}{2k-1} \right), \quad
k=1,2,\ldots.
\end{align}

\item[\rm (iii)] Let $\nu=\lfloor (n+1)/2 \rfloor$ and let
$x_{\nu}<\cdots<x_1$ be the zeros of $P_n(x)$ on the interval
$[0,1]$. Then, $|\phi_n^{\mathrm{LGL}}(x)|$ attains its local
maximum values at these points and
\begin{align}\label{eq:DecSeq}
|\phi_n^{\mathrm{LGL}}(x_1)|<|\phi_n^{\mathrm{LGL}}(x_2)|<\cdots<|\phi_n^{\mathrm{LGL}}(x_{\nu})|.
\end{align}

\item[\rm (iv)] The maximum value of $|\phi_{n}^{\mathrm{LGL}}(x)|$ satisfies
\begin{align}\label{eq:Ineq}
\max_{x\in\Omega}|\phi_{n}^{\mathrm{LGL}}(x)| \leq
\frac{4}{\sqrt{2\pi n}}, \quad n=1,2,\ldots,
\end{align}
and the bound on the right-hand side is optimal in the sense that it
can not be improved further.
\end{itemize}
\end{theorem}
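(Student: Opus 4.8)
The plan is to treat the four items in the order stated, since each feeds the next. Items (i) and (ii) are direct consequences of classical Legendre identities. For (i), the parity relation $P_k(-x)=(-1)^kP_k(x)$ gives $\phi_k^{\mathrm{LGL}}(-x)=(-1)^{k+1}\phi_k^{\mathrm{LGL}}(x)$, so $|\phi_k^{\mathrm{LGL}}|$ is even, while $P_k(1)=1$ and $P_k(-1)=(-1)^k$ give $\phi_k^{\mathrm{LGL}}(\pm1)=0$ for $k\ge1$. For (ii), I would differentiate the right-hand side and apply the contiguous relation $P_{m+1}'(x)-P_{m-1}'(x)=(2m+1)P_m(x)$ twice: it collapses $\frac{d}{dx}(P_{k+2}-P_k)/(2k+3)$ to $P_{k+1}$ and $\frac{d}{dx}(P_k-P_{k-2})/(2k-1)$ to $P_{k-1}$, leaving $P_{k+1}-P_{k-1}=\phi_k^{\mathrm{LGL}}$.

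For (iii), the starting observation is $\frac{d}{dx}\phi_n^{\mathrm{LGL}}(x)=(2n+1)P_n(x)$, so the interior critical points of $\phi_n^{\mathrm{LGL}}$ are exactly the zeros of $P_n$; the values there are nonzero since the zeros of $P_{n-1}$ and $P_n$ interlace, so these are genuine local maxima of $|\phi_n^{\mathrm{LGL}}|$. To compare their heights I would first reduce the value at a zero $x_j$ of $P_n$ to a single Legendre quantity: the three-term recurrence gives $\phi_n^{\mathrm{LGL}}(x_j)=-\frac{2n+1}{n+1}P_{n-1}(x_j)$, and the identity $(1-x^2)P_n'(x)=n\big(P_{n-1}(x)-xP_n(x)\big)$ then yields
\begin{align*}
\phi_n^{\mathrm{LGL}}(x_j)^2=\frac{(2n+1)^2}{n^2(n+1)^2}\,H(x_j),\qquad H(x):=(1-x^2)\Big[(1-x^2)\big(P_n'(x)\big)^2+n(n+1)P_n(x)^2\Big].
\end{align*}
The key point is that $H$ is monotone: using Legendre's equation one computes $H'(x)=-2n(n+1)\,x\,P_n(x)^2\le0$ on $[0,1]$, so $H$ is decreasing there. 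Since $x_\nu<\cdots<x_1$, this gives $H(x_1)<\cdots<H(x_\nu)$ and hence the claimed ordering \eqref{eq:DecSeq}.

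For (iv), items (i) and (iii) reduce the problem to a single value, $\max_{x\in\Omega}|\phi_n^{\mathrm{LGL}}(x)|=|\phi_n^{\mathrm{LGL}}(x_\nu)|$, where $x_\nu$ is the smallest nonnegative zero of $P_n$. Monotonicity of $H$ gives $H(x_\nu)\le H(0)$, and since exactly one of $P_n(0),P_n'(0)$ vanishes according to the parity of $n$, the value $H(0)=P_n'(0)^2+n(n+1)P_n(0)^2$ reduces to a central binomial coefficient via $|P_{2m}(0)|=\binom{2m}{m}4^{-m}$ (together with $P_n'(0)=nP_{n-1}(0)$ in the odd case). The decisive estimate is the sharp bound $\binom{2m}{m}4^{-m}\le\big(\pi(m+\tfrac14)\big)^{-1/2}$; feeding it into the two parity cases and simplifying with elementary inequalities such as $2n+1\le 2n+2$ produces $\max_{x\in\Omega}|\phi_n^{\mathrm{LGL}}(x)|\le 4/\sqrt{2\pi n}$ for $n\ge2$, with $n=1$ checked by hand. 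I expect this sharp binomial inequality to be the crux: the cruder estimate $\binom{2m}{m}4^{-m}\le(\pi m)^{-1/2}$ overshoots the target constant, so the refinement at $+\tfrac14$ is exactly what closes the gap, and it is also what lets the even case (where $x_\nu>0$) survive the slack in $H(x_\nu)\le H(0)$.

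Finally, for optimality I would exhibit asymptotic equality along odd $n$. There $x_\nu=0$ and $H(x_\nu)\le H(0)$ is an equality, so $\max_{x\in\Omega}|\phi_n^{\mathrm{LGL}}|=\frac{2n+1}{n+1}|P_{n-1}(0)|$ exactly; Stirling's formula gives $|P_{n-1}(0)|\sim\sqrt{2/(\pi n)}$, whence $\sqrt{n}\,\max_{x\in\Omega}|\phi_n^{\mathrm{LGL}}|\to 4/\sqrt{2\pi}$. Since $\sqrt n$ times the right-hand side of \eqref{eq:Ineq} equals the constant $4/\sqrt{2\pi}$, no smaller constant can work, proving optimality. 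The two genuine obstacles are thus the discovery of the monotone auxiliary function $H$ in (iii) and the use of the correct sharp central-binomial inequality in (iv); everything else is bookkeeping with classical Legendre identities.
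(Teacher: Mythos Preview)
Your proof is correct and is essentially the same as the paper's. Your auxiliary function $H$ is in fact a constant multiple of the paper's choice $\psi(x)=\frac{n(n+1)}{(2n+1)^2}(\phi_n^{\mathrm{LGL}})^2+(1-x^2)P_n^2$: both have derivative proportional to $-2xP_n^2$ and both vanish at $x=\pm1$, so $H\equiv n(n+1)\psi$, and the odd/even split in (iv) is identical. The only variation is in the final sharp-constant step: the paper writes the odd and even bounds as explicit gamma-ratio sequences $\chi_n,\zeta_n$ and shows that $\sqrt{n}\,\chi_n$ and $\sqrt{n}\,\zeta_n$ are strictly increasing with limit $4/\sqrt{2\pi}$, yielding the bound and its optimality simultaneously, whereas you invoke the Kazarinoff inequality $\binom{2m}{m}4^{-m}<(\pi(m+\tfrac14))^{-1/2}$ and finish with $2n+1\le2n+2$ (even $n$) plus a short algebraic check for odd $n\ge3$ and $n=1$ by hand. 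These are equivalent encodings of the same sharp estimate on $\Gamma(m+\tfrac12)/\Gamma(m+1)$; the paper's version is self-contained, yours is slightly quicker but imports a known inequality.
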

\begin{proof}
As for (i), they follow from the properties of Legendre polynomials,
i.e., $P_k(-x)=(-1)^kP_k(x)$ and $P_k(\pm1)=(\pm1)^k$ for
$k=0,1,\ldots$. As for (ii), we obtain from
\cite[Equation~(4.7.29)]{szego1975orth} that
\begin{align}\label{eq:LegRec}
\frac{\mathrm{d}}{\mathrm{d}x} \phi_k^{\mathrm{LGL}}(x) = (2k+1)
P_k(x), \quad k=0,1,\ldots.
\end{align}
The identity \eqref{eq:LegDiffRec} follows immediately from
\eqref{eq:LegRec}. As for (iii), due to \eqref{eq:LegRec} we know
that $|\phi_n^{\mathrm{LGL}}(x)|$ attains its local maximum values
at the zeros of $P_n(x)$. Since $|\phi_n^{\mathrm{LGL}}(x)|$ is an
even function on $\Omega$, we only need to consider the maximum
values of $|\phi_n^{\mathrm{LGL}}(x)|$ at the nonnegative zeros of
$P_n(x)$. To this end, we introduce an auxiliary function
\begin{align}\label{eq:AuxFun}
\psi(x) = \frac{n(n+1)}{(2n+1)^2}(\phi_n^{\mathrm{LGL}}(x))^2 +
(1-x^2)P_n^2(x).
\end{align}
Direct calculation of the derivative of $\psi(x)$ by using
\eqref{eq:LegRec} gives
\begin{align}\label{eq:AuxFunD}
\psi{'}(x) &= \frac{2n(n+1)}{2n+1}\phi_n^{\mathrm{LGL}}(x)P_n(x) -
2xP_n^2(x) + 2(1-x^2)P_n(x)P_n{'}(x) \nonumber \\
&= 2P_n(x) \left[ \frac{n(n+1)}{2n+1}\phi_n^{\mathrm{LGL}}(x) -
xP_n(x) + (1-x^2)P_n{'}(x) \right] \nonumber \\
&= -2x P_n^2(x),
\end{align}
where we have used the first equality of
\cite[Equation~(4.7.27)]{szego1975orth} in the last step. From
\eqref{eq:AuxFunD} it is easy to see that $\psi{'}(x)<0$ whenever
$x\in[0,1]$ and $x\neq x_j$ for $j=1,\ldots,\mu$, and thus $\psi(x)$
is strictly decreasing on the interval $[0,1]$. Combining this with
\eqref{eq:AuxFun} gives the desired result \eqref{eq:DecSeq}. As for
(iv), it follows from \eqref{eq:DecSeq} that
\begin{align}\label{eq:MaxVal}
\max_{x\in\Omega}|\phi_{n}^{\mathrm{LGL}}(x)| =
|\phi_n^{\mathrm{LGL}}(x_{\nu})|.
\end{align}
Now, we first consider the case where $n$ is odd. In this case, we
know that $x_{\nu}=0$ and thus
\begin{align}
\max_{x\in\Omega}|\phi_{n}^{\mathrm{LGL}}(x)|=|\phi_n^{\mathrm{LGL}}(0)|
=
\frac{(2n+1)\Gamma(\frac{n}{2})}{\sqrt{\pi}(n+1)\Gamma(\frac{n+1}{2})}
:= \chi_n.  \nonumber
\end{align}
A straightforward calculation shows that the sequence $\{
n^{1/2}\chi_n\}_{n=1}^{\infty}$ is a strictly increasing sequence,
and thus
\begin{align}
n^{1/2}\chi_n \leq \lim_{n\rightarrow\infty} n^{1/2} \chi_n =
\frac{4}{\sqrt{2\pi}} \quad \Longrightarrow \quad
\max_{x\in\Omega}|\phi_{n}^{\mathrm{LGL}}(x)| \leq
\frac{4}{\sqrt{2\pi n}}. \nonumber
\end{align}
This proves the case of odd $n$. We next consider the case where $n$
is even. Recall that $\psi(x)$ is strictly decreasing on the
interval $[0,1]$, we obtain that
\begin{align}
\psi(x) \leq \psi(0) \quad \Longrightarrow \quad
\max_{x\in\Omega}|\phi_n^{\mathrm{LGL}}(x)| \leq
\frac{(2n+1)\Gamma(\frac{n+1}{2})}{\sqrt{\pi
n(n+1)}\Gamma(\frac{n+2}{2})} := \zeta_n.  \nonumber
\end{align}
A straightforward calculation shows that the sequence
$\{n^{1/2}\zeta_n\}_{n=1}^{\infty}$ is a strictly increasing
sequence, and thus
\begin{align}
n^{1/2} \zeta_n \leq \lim_{n\rightarrow\infty} n^{1/2} \zeta_n =
\frac{4}{\sqrt{2\pi}} \quad \Longrightarrow \quad
\max_{x\in\Omega}|\phi_n^{\mathrm{LGL}}(x)| \leq \frac{4}{\sqrt{2\pi
n}}. \nonumber
\end{align}
This proves the case of even $n$. Since the constant $4/\sqrt{2\pi}$
is the limit of the sequences $\{ n^{1/2}\chi_n\}_{n=1}^{\infty}$
and $\{n^{1/2}\zeta_n\}_{n=1}^{\infty}$, the bound in
\eqref{eq:Ineq} is optimal in the sense that it can not be reduced
further. This completes the proof.
\end{proof}

\begin{figure}[h]
\centering
\begin{overpic}
[width=4.8cm,height=5.0cm]{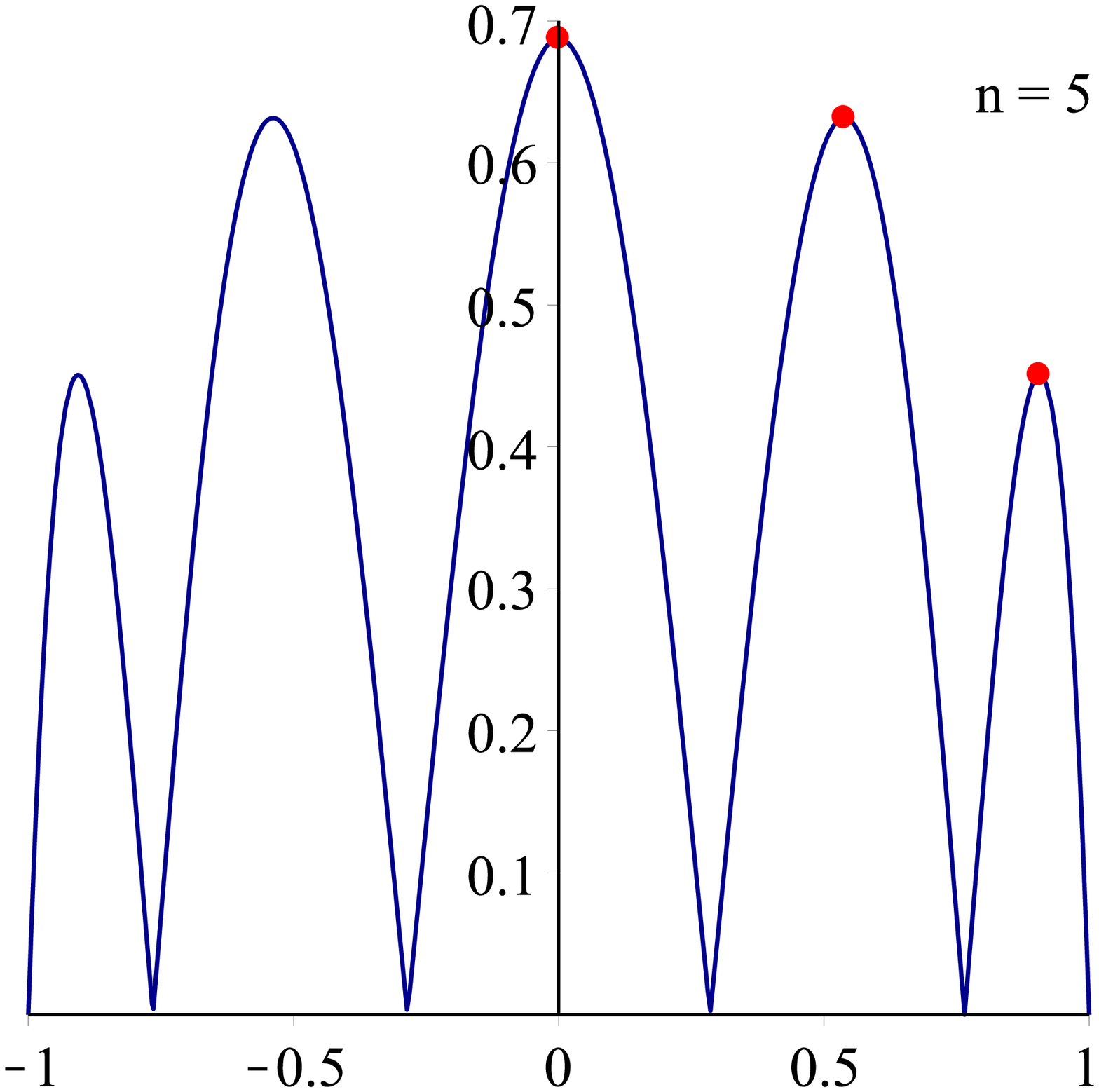}
\end{overpic}
\begin{overpic}
[width=4.8cm,height=5.0cm]{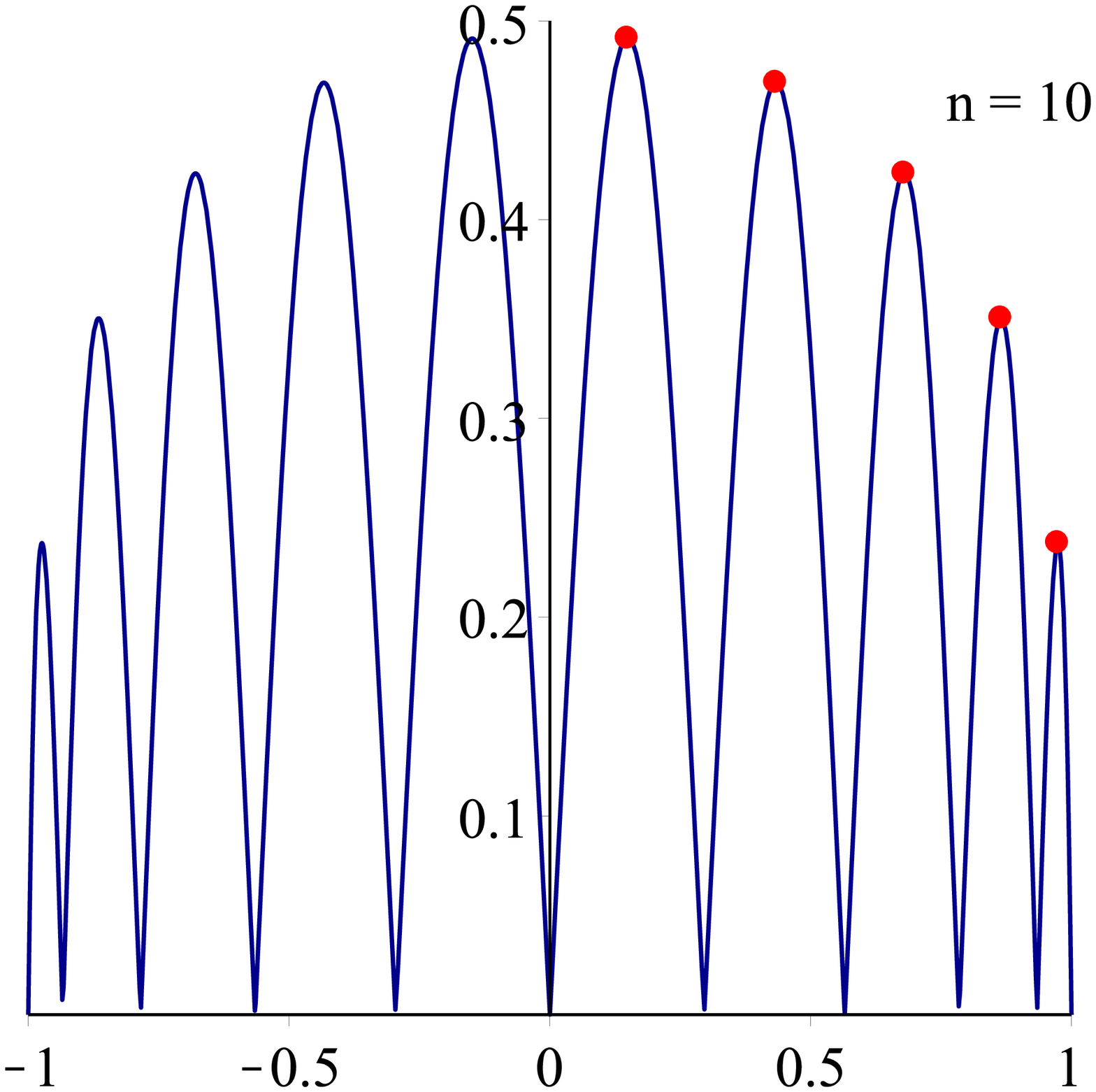}
\end{overpic}
\begin{overpic}
[width=4.8cm,height=5.0cm]{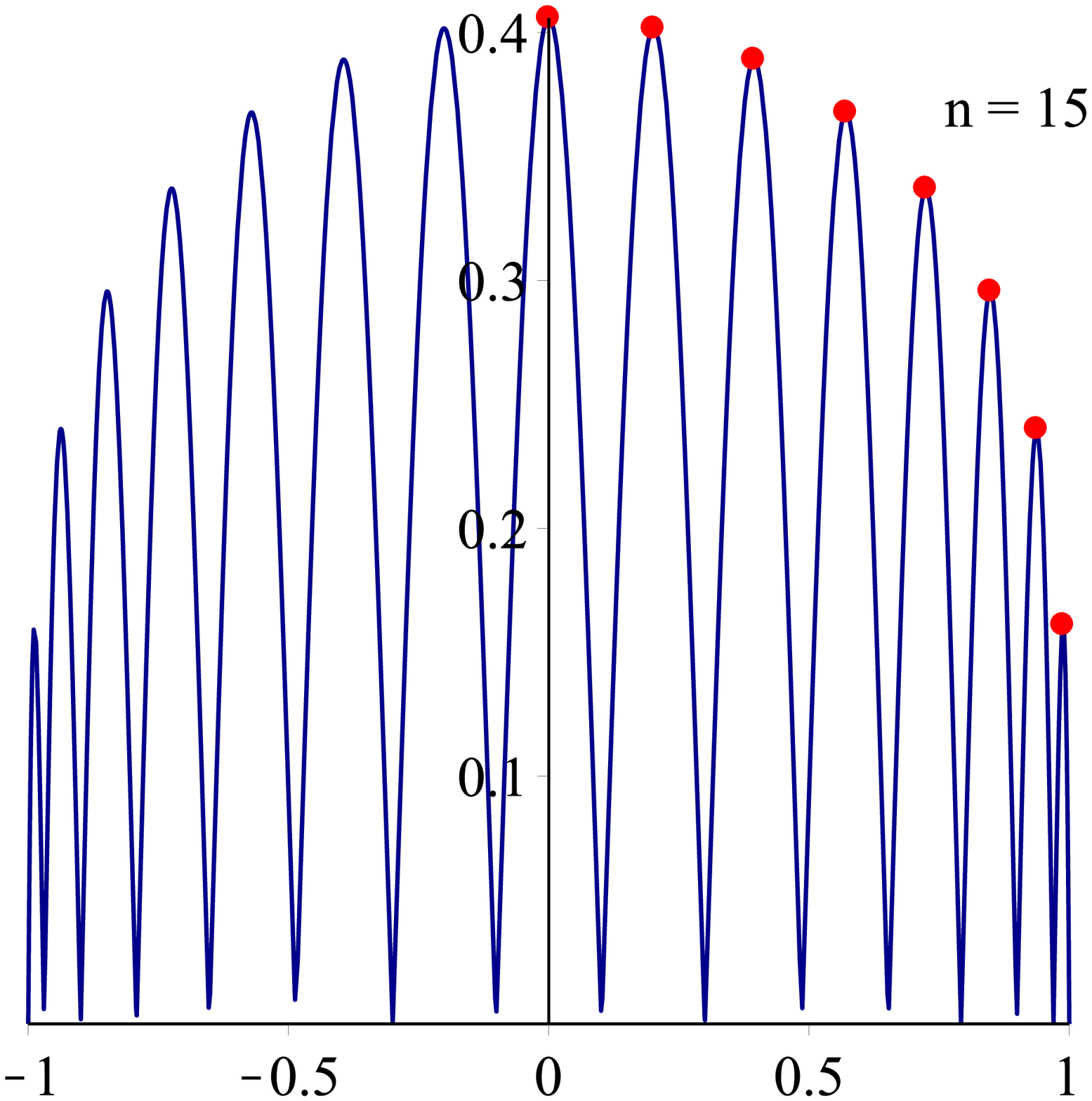}
\end{overpic}
\caption{The plot of $|\phi_n^{\mathrm{LGL}}(x)|$ and the points
$\{(x_j,|\phi_n^{\mathrm{LGL}}(x_j)|)\}_{j=1}^{\nu}$ for three
values of $n$.} \label{fig:LegPolyI}
\end{figure}

In Figure \ref{fig:LegPolyI} we plot the function
$|\phi_n^{\mathrm{LGL}}(x)|$ and the points
$\{(x_j,|\phi_n^{\mathrm{LGL}}(x_j)|)\}_{j=1}^{\nu}$ for
$n=5,10,15$. Indeed, it is easily seen that the sequence
$\{|\phi_n^{\mathrm{LGL}}(x_1)|,\ldots,|\phi_n^{\mathrm{LGL}}(x_{\nu})|\}$
is strictly increasing, which coincides with the result (iii) in
Theorem \ref{thm:PhiFun}. To verify the sharpness of the result (iv)
in Theorem \ref{thm:PhiFun}, we plot in Figure \ref{fig:LegPolyII}
the scaled function
$\phi_n^{S}(x)=|\phi_{n}^{\mathrm{LGL}}(x)|\sqrt{2\pi n}/4$ for
$x\in\Omega$. Clearly, we observe that the maximum value of
$|\phi_n^{S}(x)|$ approaches to one as $n\rightarrow\infty$, which
implies the inequality \eqref{eq:Ineq} is quite sharp.
\begin{figure}[h]
\centering
\begin{overpic}
[width=4.8cm,height=5.cm]{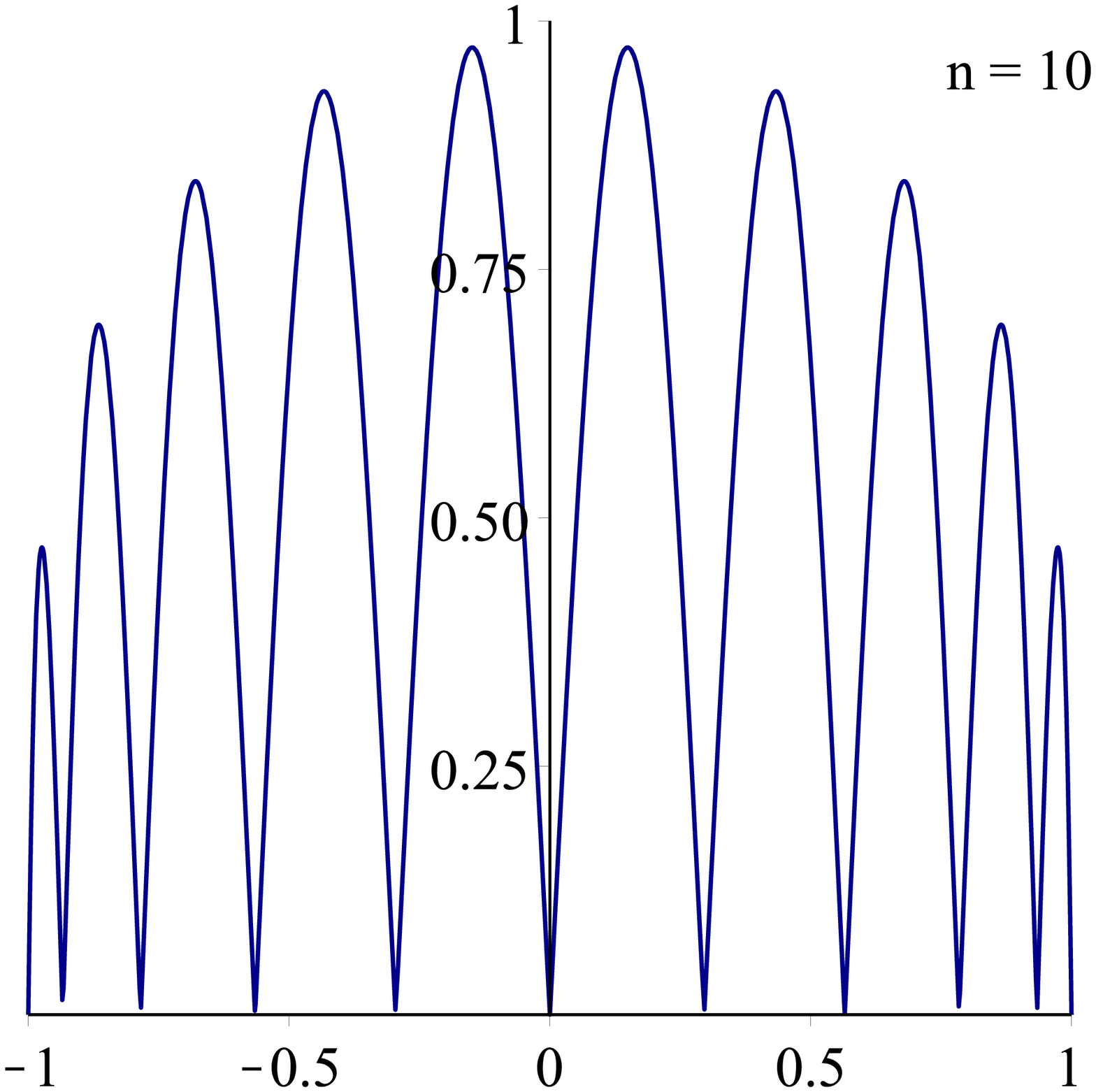}
\end{overpic}
\begin{overpic}
[width=4.8cm,height=5.cm]{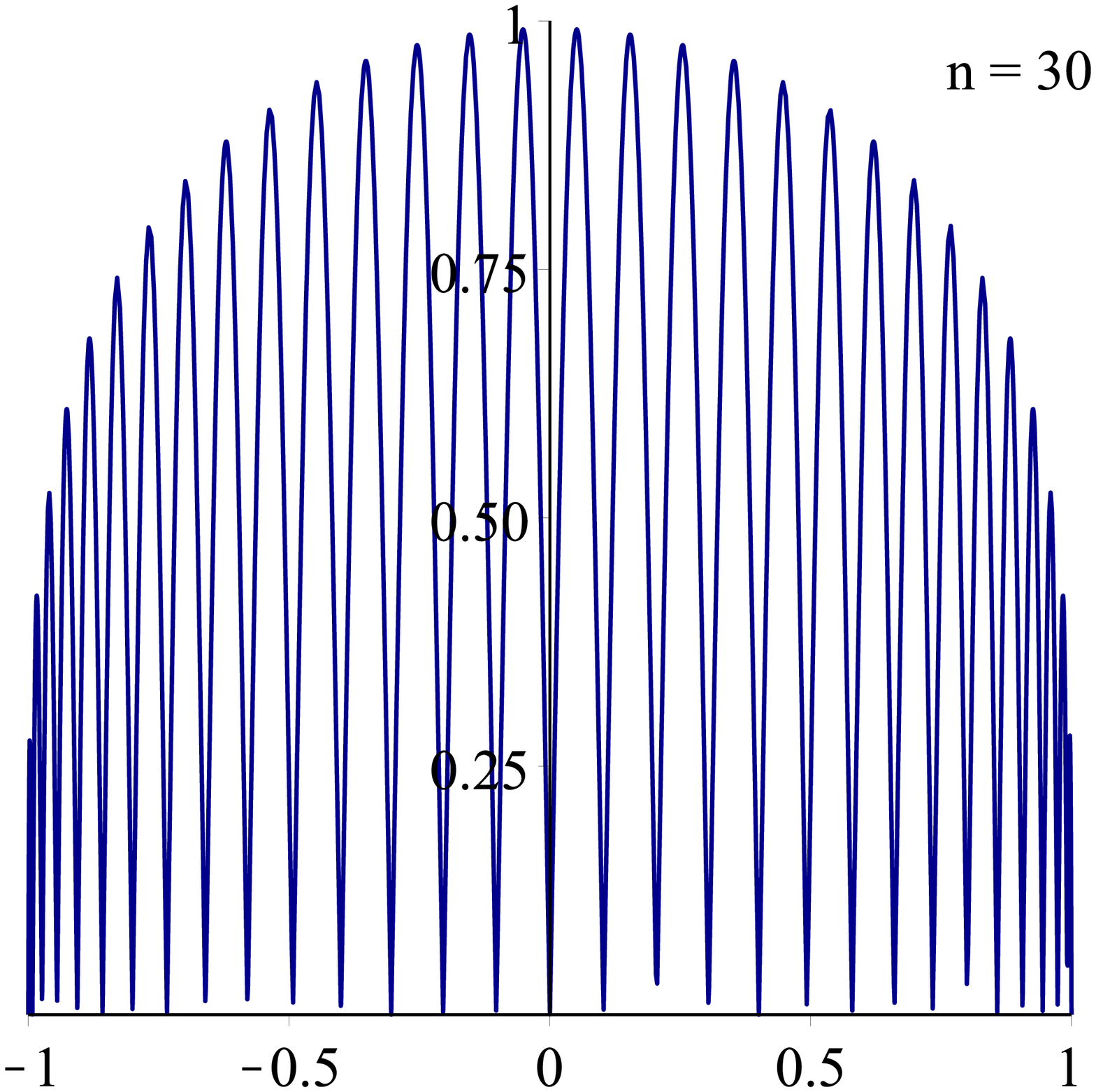}
\end{overpic}
\begin{overpic}
[width=4.8cm,height=5.cm]{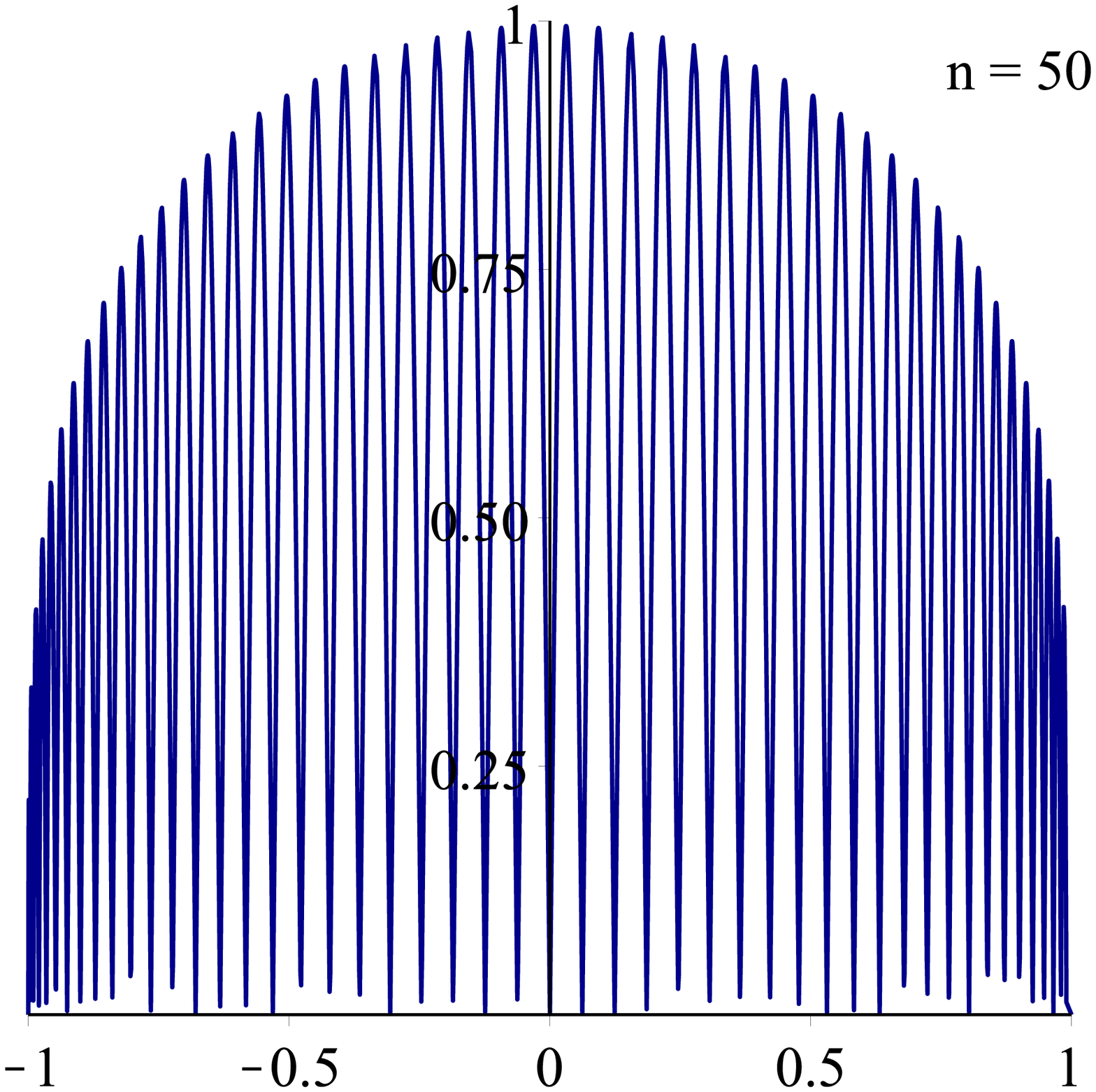}
\end{overpic}
\caption{The plot of the scaled function $\phi_n^{S}(x)$ for three
values of $n$.} \label{fig:LegPolyII}
\end{figure}

\begin{remark}
In the proof of Theorem \ref{thm:PhiFun}, we have actually proved
the following sharper inequality
\begin{align}\label{eq:LGLMax}
\max_{x\in\Omega}|\phi_{n}^{\mathrm{LGL}}(x)| \leq
\frac{(2n+1)}{\sqrt{\pi}} \left\{
\begin{array}{ll}
{\displaystyle \frac{\Gamma(\frac{n}{2})}{(n+1)\Gamma(\frac{n+1}{2})}},  & \hbox{$n=1,3,\ldots$,}   \\[12pt]
{\displaystyle
\frac{\Gamma(\frac{n+1}{2})}{\sqrt{n(n+1)}\Gamma(\frac{n+2}{2})}}, &
\hbox{$n=2,4,\ldots$,}
            \end{array}
            \right.
\end{align}
Since the bound on the right-hand side of \eqref{eq:LGLMax} involves
the ratio of gamma functions, we therefore established a simpler
upper bound for $\max_{x\in\Omega}|\phi_{n}^{\mathrm{LGL}}(x)|$ in
\eqref{eq:Ineq}. On the other hand, we mention that a rough estimate
for $\phi_n^{\mathrm{LGL}}(x)$ has been given in the classical
monograph \cite[Theorem~7.33.3]{szego1975orth}:
\begin{align}\label{eq:SzegoPhi}
\phi_n^{\mathrm{LGL}}(\cos\theta) = (\sin\theta)^{1/2} O(n^{-1/2}),
\quad  0<\theta<\pi,
\end{align}
and the bound of the factor $O(n^{-1/2})$ is independent of
$\theta$. Comparing \eqref{eq:SzegoPhi} with \eqref{eq:Ineq}, it is
clear to see that the latter is more precise than the former.
\end{remark}

\begin{remark}
The polynomials $\{\phi_k^{\mathrm{LGL}}\}$ can also be viewed as
weighted Gegenbauer or Sobolev orthogonal polynomials. Indeed, from
\eqref{def:GGLFun2} in Section \ref{sec:extension} we know that each
$\phi_k^{\mathrm{LGL}}(x)$, where $k\geq1$, can be expressed by a
weighted Gegenbauer polynomial up to a constant factor. On the other
hand, it is easily verified that $\{\phi_k^{\mathrm{LGL}}\}$ are
orthogonal with respect to the Sobolev inner product of the form
\[
\langle f,g \rangle = [f(1)+f(-1)][g(-1)+g(1)] + \int_{\Omega}
f{'}(x) g{'}(x)\mathrm{d}x,
\]
and hence they can also be viewed as Sobolev orthogonal polynomials.
\end{remark}

\section{A new explicit bound for Legendre coefficients of differentiable functions}\label{sec:coefficient}
In this section, we establish a new explicit bound for the Legendre
coefficients of differentiable functions with the help of the
properties of LGL polynomials. As will be shown later, our new
result is better than the existing result in \cite{wang2018new} and
is more informative than the result in \cite{xiang2020jacobi}.

Let the total variation of $f(x)$ on $\Omega$ be defined by
\begin{align}
\mathcal{V}(f,\Omega) = \sup \sum_{j=1}^{n} \left| f(x_j)-f(x_{j-1})
\right|,
\end{align}
where the supremum is taken over all partitions
$-1=x_0<x_1<\cdots<x_n=1$ of the interval $\Omega$.
\begin{theorem}\label{thm:NewBound}
If $f,f{'},\ldots,f^{(m-1)}$ are absolutely continuous on $\Omega$
and $f^{(m)}$ is of bounded variation on $\Omega$ for some
nonnegative integer $m$, i.e.,
$V_m=\mathcal{V}(f^{(m)},\Omega)<\infty$. Then, for each $n\geq
m+1$,
\begin{align}\label{eq:NewBound}
|a_n| &\leq \frac{2V_m}{\sqrt{2\pi(n-m)}} \prod_{k=1}^{m}
\frac{1}{n-k+1/2}.
\end{align}
where the product is assumed to be one whenever $m=0$.
\end{theorem}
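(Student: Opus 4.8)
The plan is to run repeated integration by parts, driven by the two structural facts from Theorem \ref{thm:PhiFun}: the endpoint-vanishing property (i), $\phi_k^{\mathrm{LGL}}(\pm1)=0$, and the differential recurrence \eqref{eq:LegDiffRec}, which hands us explicit antiderivatives of the $\phi_k^{\mathrm{LGL}}$ that again vanish at $\pm1$. Starting from $P_n=(2n+1)^{-1}(\phi_n^{\mathrm{LGL}})'$ via \eqref{eq:LegRec}, I would build a chain of iterated antiderivatives $G_0,G_1,\ldots,G_m$ of $P_n$, where $G_0=(2n+1)^{-1}\phi_n^{\mathrm{LGL}}$ and each $G_j$ is the unique antiderivative of $G_{j-1}$ vanishing at $\pm1$. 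The recurrence \eqref{eq:LegDiffRec} shows that each $G_j$ is an explicit linear combination of $\phi_{n-j}^{\mathrm{LGL}},\phi_{n-j+2}^{\mathrm{LGL}},\ldots,\phi_{n+j}^{\mathrm{LGL}}$, and that it does vanish at $\pm1$ as long as the smallest index $n-j$ stays $\geq1$, which is exactly guaranteed by $n\geq m+1$.

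Performing $m$ ordinary integrations by parts (legitimate since $f,\ldots,f^{(m-1)}$ are absolutely continuous), followed by one Riemann--Stieltjes integration by parts against $\mathrm{d}f^{(m)}$ (legitimate since $f^{(m)}$ is of bounded variation), and discarding every boundary term through $G_j(\pm1)=0$, I expect to reach the clean identity $\int_{-1}^1 f P_n\,\mathrm{d}x=(-1)^{m+1}\int_{-1}^1 G_m(x)\,\mathrm{d}f^{(m)}(x)$, valid for all $m\geq0$ (the case $m=0$ being a single Stieltjes step). Taking absolute values and using $\bigl|\int_{-1}^1 G_m\,\mathrm{d}f^{(m)}\bigr|\leq\|G_m\|_{L^\infty(\Omega)}V_m$ then reduces the whole theorem to one sup-norm estimate, namely $(n+\tfrac12)\|G_m\|_{L^\infty(\Omega)}\leq \tfrac{2}{\sqrt{2\pi(n-m)}}\prod_{k=1}^m(n-k+\tfrac12)^{-1}$.

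To prove that estimate I would exploit the LGL expansion of $G_m$ together with the optimal bound \eqref{eq:Ineq}. Writing $G_m=\sum_j c_{m,j}\phi_{\mathrm{idx}_j}^{\mathrm{LGL}}$ and applying \eqref{eq:Ineq} termwise, every index present is at least $n-m$, so $\|G_m\|_{L^\infty(\Omega)}\leq \tfrac{4}{\sqrt{2\pi(n-m)}}\,S_m$ with $S_m:=\sum_j|c_{m,j}|$. It then remains to control $S_m$. Passing from $G_j$ to $G_{j+1}$ replaces each $\phi_k^{\mathrm{LGL}}$ by $\tfrac{1}{2k+3}\phi_{k+1}^{\mathrm{LGL}}-\tfrac{1}{2k-1}\phi_{k-1}^{\mathrm{LGL}}$, so the triangle inequality gives $S_{j+1}\leq\sum_{(c,k)}|c|\bigl(\tfrac{1}{2k+3}+\tfrac{1}{2k-1}\bigr)$; since every index $k$ occurring in $G_j$ satisfies $k\geq n-j$, each bracket is at most $\tfrac{2}{2(n-j)-1}$, whence the one-step recursion $S_{j+1}\leq\tfrac{2}{2(n-j)-1}S_j$. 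Starting from $S_0=(2n+1)^{-1}$ and telescoping yields $S_m\leq \tfrac{2^m}{(2n+1)(2n-1)\cdots(2n-2m+1)}$; substituting this back, and using $(n+\tfrac12)(2n+1)^{-1}=\tfrac12$ so that the leading numerical factor becomes $4\cdot\tfrac12=2$, produces exactly \eqref{eq:NewBound}.

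The main obstacle is the sup-norm estimate for $G_m$: a naive antiderivative bound loses a factor of order $2$ at each step and destroys the $n^{-m}$ decay, so that decay must instead be extracted from the oscillatory cancellation encoded in \eqref{eq:LegDiffRec}. The device that rescues the argument is to bound $\|G_m\|_{L^\infty(\Omega)}$ through its LGL coefficients rather than directly: this both supplies the factor $(n-m)^{-1/2}$ from the lowest-index term via the optimal bound \eqref{eq:Ineq}, and converts the product of recurrence weights into the exact factor $\prod_{k=1}^m(n-k+\tfrac12)^{-1}$. Secondary care is needed to justify the Stieltjes integration by parts at the top level and to verify that the index never drops below $1$ during the construction (so that all boundary terms genuinely vanish), both of which hold precisely under the hypothesis $n\geq m+1$.
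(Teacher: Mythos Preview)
Your proposal is correct and follows essentially the same approach as the paper: repeated integration by parts driven by the recurrence \eqref{eq:LegDiffRec} and the endpoint vanishing of $\phi_k^{\mathrm{LGL}}$, capped by a Riemann--Stieltjes step against $\mathrm{d}f^{(m)}$, then termwise use of the bound \eqref{eq:Ineq} with all indices $\geq n-m$. The only difference is packaging: the paper writes out the cases $m=0,1,2$ explicitly and invokes induction, whereas you organize the same computation through the iterated antiderivatives $G_j$ and the coefficient-sum recursion $S_{j+1}\leq \tfrac{2}{2(n-j)-1}S_j$, which makes the inductive mechanism (and the role of the hypothesis $n\geq m+1$) more transparent.
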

\begin{proof}
We prove the inequality \eqref{eq:NewBound} by induction on $m$.
Invoking \eqref{eq:LegRec} and using integration by part, we obtain
\begin{align}
a_n &= \frac{2n+1}{2}\int_{-1}^{1} f(x) P_n(x) \mathrm{d}x =
\frac{1}{2} \int_{-1}^{1} f(x) \frac{\mathrm{d}}{\mathrm{d}x}
\phi_n^{\mathrm{LGL}}(x) \mathrm{d}x = - \frac{1}{2}\int_{-1}^{1}
\phi_n^{\mathrm{LGL}}(x) \mathrm{d}f(x), \nonumber
\end{align}
where we have used Theorem \ref{thm:PhiFun} and the integral is a
Riemann-Stieltjes integral (see, e.g.,
\cite[Chapter~12]{protter1991real}) in the last equation.
Furthermore, taking advantage of the inequality of Riemann-Stieltjes
integral (see, e.g., \cite[Theorem~12.15]{protter1991real}) and
making use of Theorem \ref{thm:PhiFun}, we have
\begin{align}
|a_n| &\leq \frac{1}{2}\left|\int_{-1}^{1}\phi_n^{\mathrm{LGL}}(x)
\mathrm{d}f(x) \right| \leq \frac{V_0}{2}
\max_{x\in\Omega}|\phi_n^{\mathrm{LGL}}(x)| \leq
\frac{2V_0}{\sqrt{2\pi n}}. \nonumber
\end{align}
This proves the case $m=0$. In the case of $m=1$, making use of
\eqref{eq:LegDiffRec} and employing once again integration by part,
we obtain
\begin{align}
a_n &= - \frac{1}{2} \int_{-1}^{1} f{'}(x)
\frac{\mathrm{d}}{\mathrm{d}x}\bigg[
\frac{\phi_{n+1}^{\mathrm{LGL}}(x)}{2n+3} -
\frac{\phi_{n-1}^{\mathrm{LGL}}(x)}{2n-1} \bigg] \mathrm{d}x  \nonumber \\
&= -\frac{1}{2} \int_{-1}^{1} \bigg[
\frac{\phi_{n-1}^{\mathrm{LGL}}(x)}{2n-1} -
\frac{\phi_{n+1}^{\mathrm{LGL}}(x)}{2n+3} \bigg] \mathrm{d}f{'}(x),
\nonumber
\end{align}
from which, using again the inequality of Riemann-Stieltjes integral
and \eqref{eq:Ineq}, we find that
\begin{align}
|a_n| &\leq
\frac{V_1}{2n-1}\max_{x\in\Omega}|\phi_{n-1}^{\mathrm{LGL}}(x)| \leq
\frac{2V_1}{\sqrt{2\pi(n-1)}(n-1/2)}. \nonumber
\end{align}
This proves the case $m=1$. In the case of $m=2$, we can continue
the above process to obtain
\begin{align}
a_n = -\frac{1}{2} \int_{-1}^{1} & \bigg[
\frac{\phi_{n-2}^{\mathrm{LGL}}(x)}{(2n-3)(2n-1)} -
\frac{\phi_{n}^{\mathrm{LGL}}(x)}{(2n-1)(2n+1)} \nonumber \\
&~  - \frac{\phi_{n}^{\mathrm{LGL}}(x)}{(2n+1)(2n+3)} +
\frac{\phi_{n+2}^{\mathrm{LGL}}(x)}{(2n+3)(2n+5)} \bigg]
\mathrm{d}f{''}(x), \nonumber
\end{align}
from which we infer that
\begin{align}
|a_n| \leq \frac{2V_2}{(2n-3)(2n-1)}
\max_{x\in\Omega}|\phi_{n-2}^{\mathrm{LGL}}(x)| \leq
\frac{2V_2}{\sqrt{2\pi(n-2)}(n-1/2)(n-3/2)}. \nonumber
\end{align}
This proves the case $m=2$. For $m\geq3$, the repeated application
of integration by parts brings in higher derivatives of $f$ and
corresponding higher variations up to $V_m$. Hence we can obtain the
desired result \eqref{eq:NewBound} and this ends the proof.
\end{proof}

\begin{remark}
An explicit bound for the Legendre coefficients has been established
in \cite[Theorem~2.2]{wang2018new}
\begin{align}\label{eq:OldBound}
|a_n| \leq \frac{2\overline{V}_m}{\sqrt{\pi(2n-2m-1)}}
\prod_{k=1}^{m} \frac{1}{n-k+1/2},
\end{align}
where $\overline{V}_m=\|f^{(m)}\|_{S}$ and $\|\cdot\|_{S}$ is the
weighted semi-norm defined by $\|f\|_{S}=\int_{\Omega}
(1-x^2)^{-1/4}|f{'}(x)|\mathrm{d}x$. Comparing \eqref{eq:NewBound}
and \eqref{eq:OldBound}, it is easily seen that the weighted
semi-norm of $f^{(m)}(x)$ is replaced by the total variation of
$f^{(m)}(x)$, and \eqref{eq:NewBound} is better than
\eqref{eq:OldBound} because $V_m\leq\overline{V}_m$. Moreover, the
following bound was given in \cite[Corollary~1]{xiang2020jacobi}
\begin{align}\label{eq:Xiang}
|a_n| \leq \frac{V_m}{2^{m}\sqrt{\pi}} \frac{(n+1/2)
\Gamma((n-m)/2)}{(n+m+1) \Gamma((n+m+1)/2)}, \quad  n\geq m+1.
\end{align}
Comparing \eqref{eq:NewBound} with \eqref{eq:Xiang}, one can easily
check that both results are about equally accurate in the sense that
their ratio tends to one as $n\rightarrow\infty$. However, our
result \eqref{eq:NewBound} is more explicit and informative.
\end{remark}

\begin{example}\label{exam:ABS}
We consider the following example
\begin{align}\label{eq:ModFun}
f(x)=|x-\theta|, \quad \theta\in(-1,1).
\end{align}
It is clear that this function is absolutely continuous on $\Omega$
and its derivative is of bounded variation on $\Omega$. Moreover,
direct calculation gives $V_1=2$, and thus
\begin{align}\label{eq:Bnew}
|a_n| \leq \frac{4}{\sqrt{2\pi(n-1)}(n-1/2)} :=
\mathrm{B}^{\mathrm{New}}(n), \quad n=2,3,\ldots.  \nonumber
\end{align}
The upper bound in \eqref{eq:OldBound} can be written as
\begin{align}
|a_n| \leq \frac{4(1-\theta^2)^{-1/4}}{\sqrt{\pi(2n-3)} (n-1/2)} :=
\mathrm{B}^{\mathrm{Old}}(n), \quad n=2,3,\ldots.  \nonumber
\end{align}
Comparing $\mathrm{B}^{\mathrm{New}}(n)$ with
$\mathrm{B}^{\mathrm{Old}}(n)$, it is easily verified that the
former is always better than the latter for all $n\geq2$ and
$\theta\in(-1,1)$, especially the former remains fixed while the
latter blows up when $\theta\rightarrow\pm1$. Figure
\ref{fig:LegExam1} shows the exact Legendre coefficients and the
bound $\mathrm{B}^{\mathrm{New}}(n)$ for three values of $\theta$.
It can be seen that $\mathrm{B}^{\mathrm{New}}(n)$ is quite sharp
whenever $\theta$ is not close to both endpoints and is slightly
overestimated whenever $\theta\rightarrow\pm1$.

\begin{figure}[h]
\centering
\begin{overpic}
[width=4.8cm,height=5.cm]{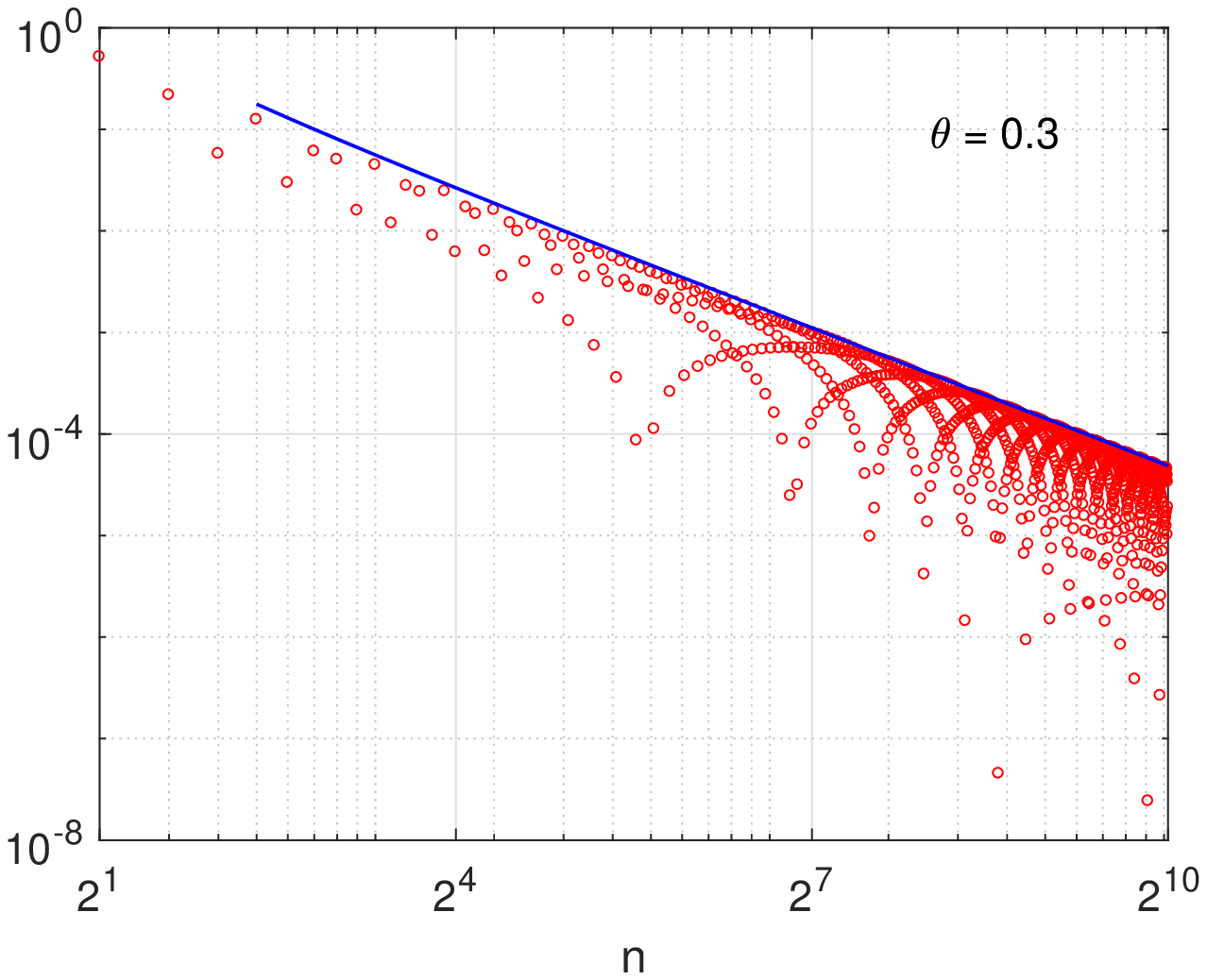}
\end{overpic}
\begin{overpic}
[width=4.8cm,height=5.cm]{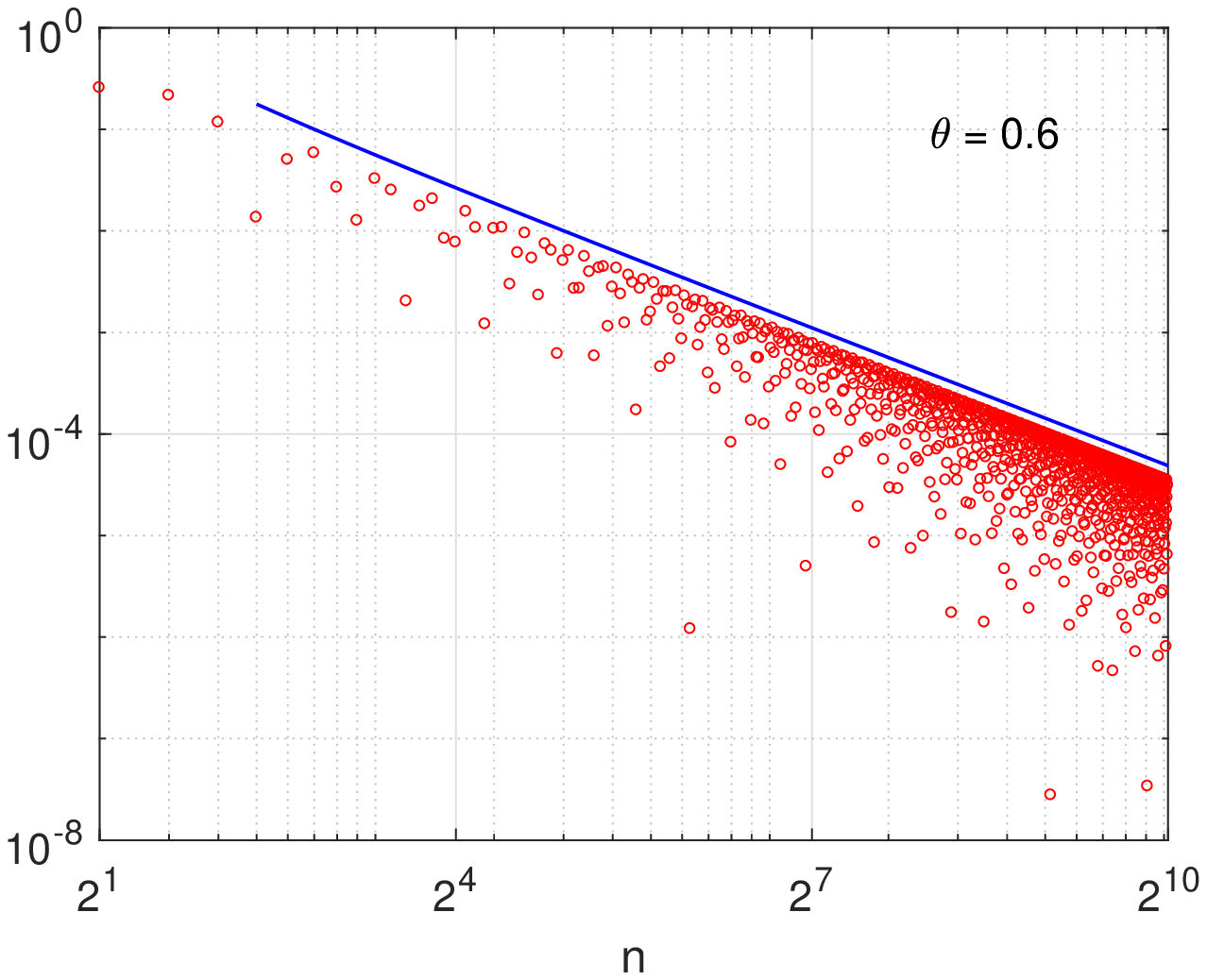}
\end{overpic}
\begin{overpic}
[width=4.8cm,height=5.cm]{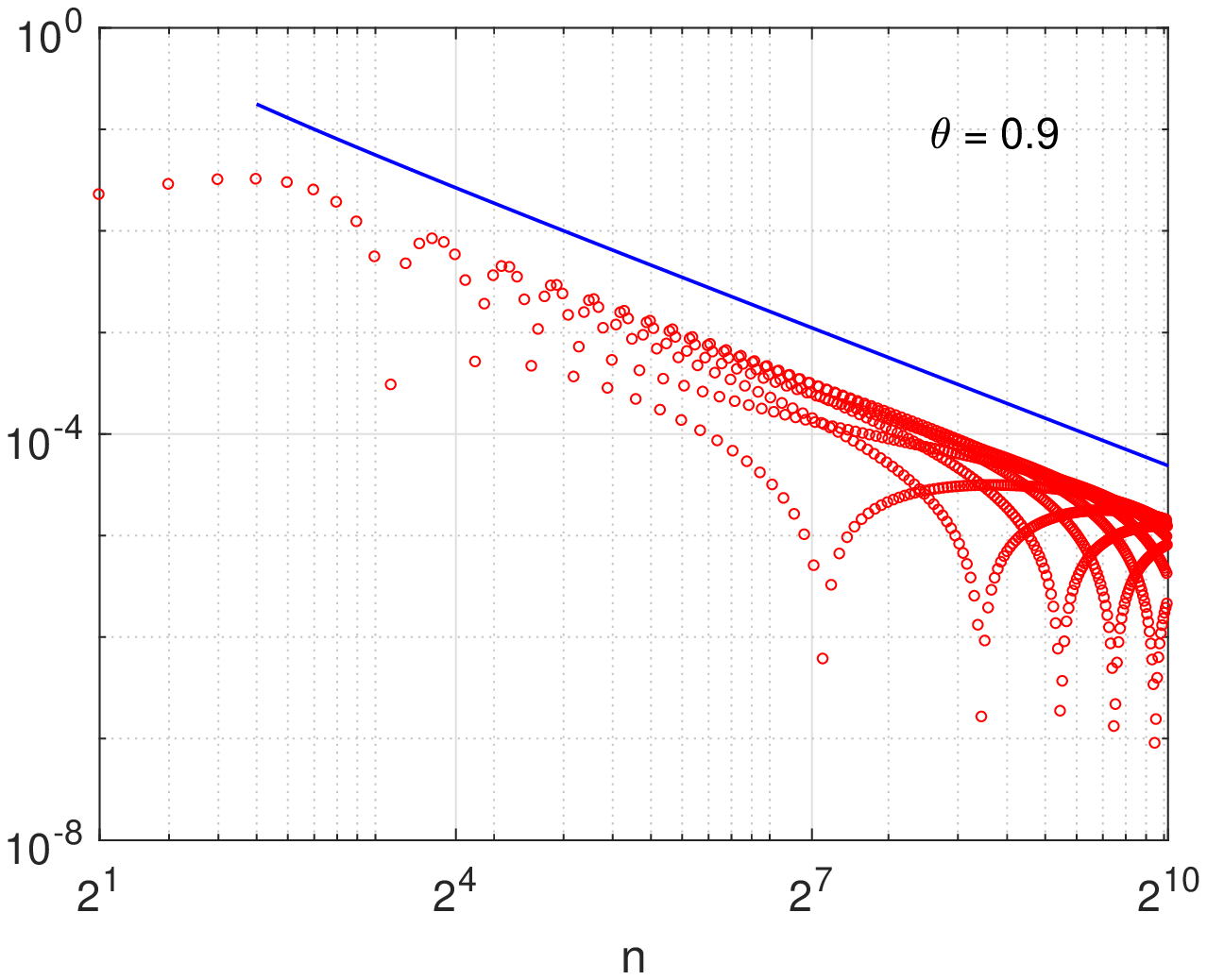}
\end{overpic}
\caption{The exact Legendre coefficients (circles) and the bound
$\mathrm{B}^{\mathrm{New}}(n)$ (line) for $\theta=0.3$ (left),
$\theta=0.6$ (middle) and $\theta=0.9$ (right). Here $f(x)$ is
defined in \eqref{eq:ModFun}.}\label{fig:LegExam1}
\end{figure}
\end{example}

\begin{example}
We consider the truncated power function
\begin{align}
f(x) = (x - \theta)_{+}^{2} = \left\{
\begin{array}{ll}
(x-\theta)^2, & \hbox{$x\geq\theta$,}   \\[6pt]
 0,           & \hbox{$x<\theta$,}
            \end{array}
            \right.
\end{align}
where $\theta\in(-1,1)$. It is clear that $f$ and $f{'}$ are
absolutely continuous on $\Omega$ and $f{''}$ is of bounded
variation on $\Omega$. Moreover, direct calculation gives $V_2=2$,
and thus
\begin{align}
|a_n| \leq \frac{4}{\sqrt{2\pi(n-2)}(n-1/2)(n-3/2)}, \quad
n=3,4,\ldots. \nonumber
\end{align}
In Figure \ref{fig:LegExam2} we illustrate the exact Legendre
coefficients and the above bound for three values of $\theta$.
Clearly, we can see that our new bound is quite sharp whenever
$\theta$ is not close to both endpoints and is slightly
overestimated whenever $\theta\rightarrow\pm1$.
\begin{figure}[h]
\centering
\begin{overpic}
[width=4.8cm,height=5.cm]{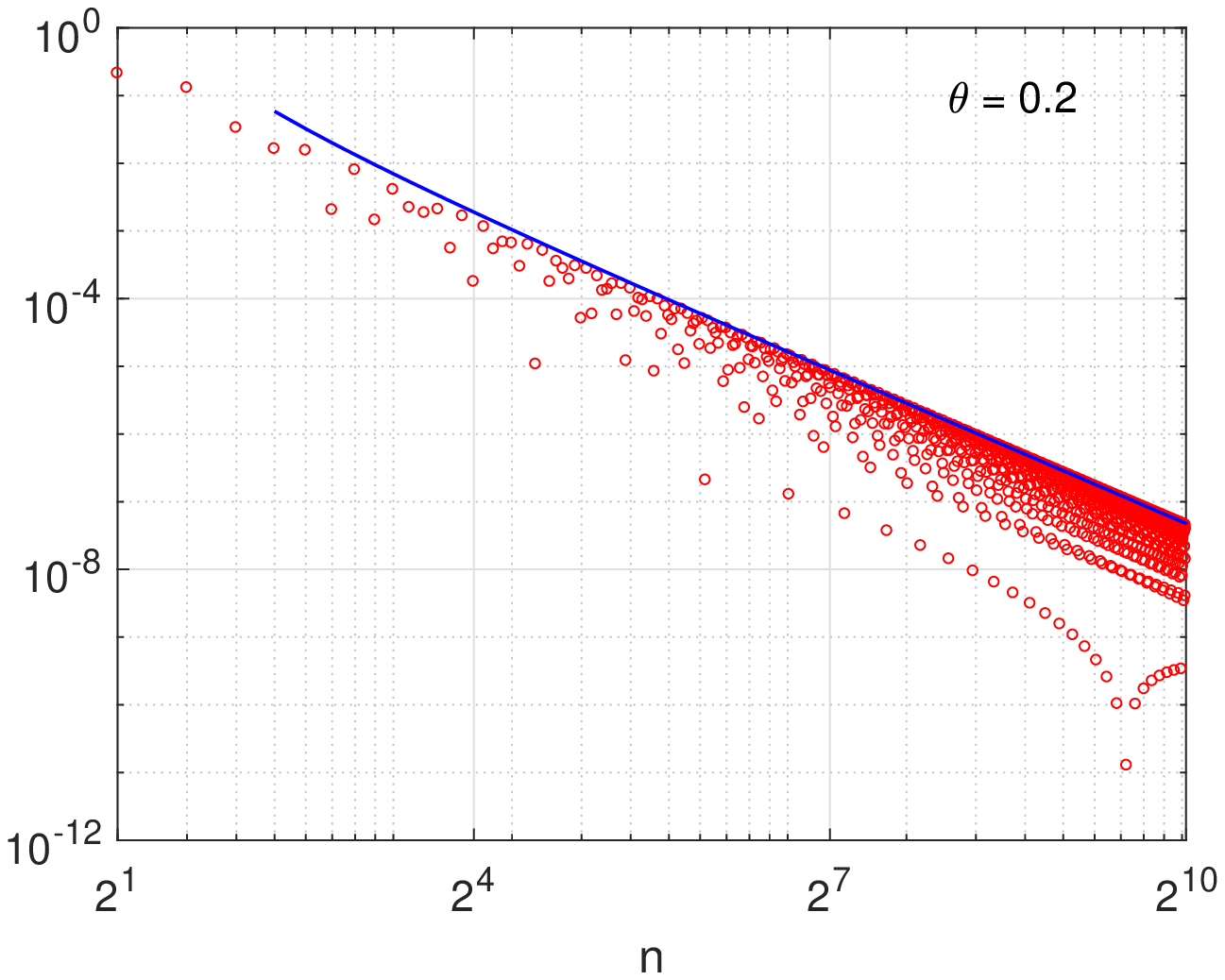}
\end{overpic}
\begin{overpic}
[width=4.8cm,height=5.cm]{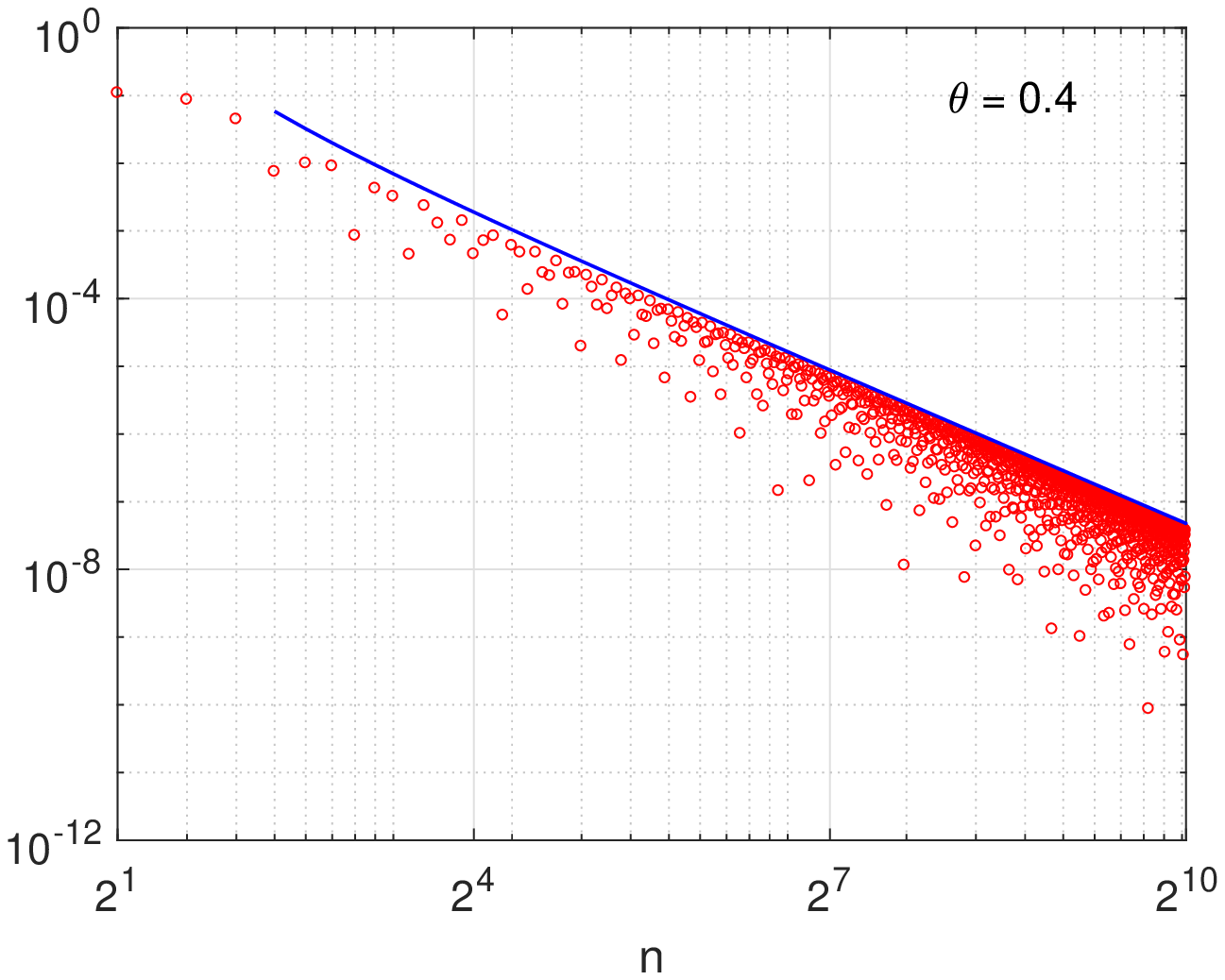}
\end{overpic}
\begin{overpic}
[width=4.8cm,height=5.cm]{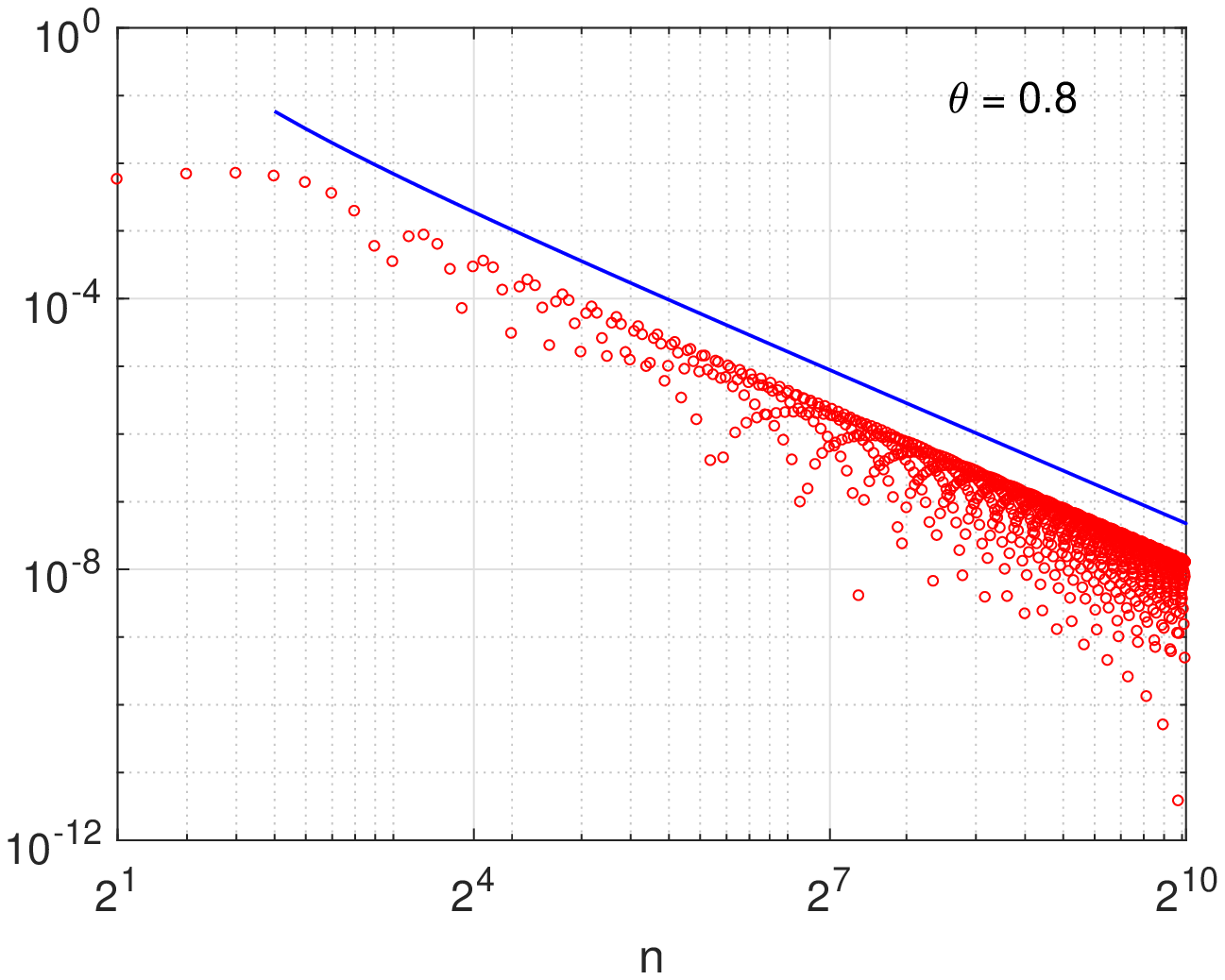}
\end{overpic}
\caption{The exact Legendre coefficients (circles) and the bound
(line) for $\theta=0.2$ (left), $\theta=0.4$ (middle) and
$\theta=0.8$ (right). Here $f(x)$ is defined in (3.6).}
\label{fig:LegExam2}
\end{figure}
\end{example}

In the following, we apply Theorem \ref{thm:NewBound} to establish
some new explicit error bounds for Legendre projections in the $L^2$
and $L^{\infty}$ norms.
\begin{theorem}\label{thm:BoundL2}
If $f,f{'},\ldots,f^{(m-1)}$ are absolutely continuous on $\Omega$
and $f^{(m)}$ is of bounded variation on $\Omega$ for some
nonnegative integer $m$, i.e.,
$V_m=\mathcal{V}(f^{(m)},\Omega)<\infty$.
\begin{itemize}
\item[\rm (i)] For $m=0,1,\ldots$ and $n\geq m+1$,
\begin{align}\label{eq:MeanError}
\|f - f_n\|_{L^2(\Omega)} \leq
\frac{V_m}{\sqrt{\pi(m+1/2)}(n-m)^{m+1/2}},
\end{align}

\item[\rm (ii)] For $m\geq1$ and $n\geq m+1$,
\begin{align}\label{eq:MaxError}
\|f - f_n\|_{L^{\infty}(\Omega)} \leq \left\{
\begin{array}{ll}
{\displaystyle \frac{4V_1}{\sqrt{2\pi(n-1)}}}, & \hbox{$m=1$,}   \\[18pt]
{\displaystyle
\frac{2V_m/(m-1)}{\sqrt{2\pi(n+1-m)}}\prod_{k=1}^{m-1}
\frac{1}{n-k+1/2}}, & \hbox{$m\geq2$,}
            \end{array}
            \right.
\end{align}
and the product is assumed to be one whenever $m=1$.
\end{itemize}
\end{theorem}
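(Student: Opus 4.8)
The plan is to reduce both error bounds to the coefficient estimate of Theorem~\ref{thm:NewBound} followed by an elementary summation. I would exploit two complementary devices: a crude factor-by-factor bound together with integral comparison for the $L^2$ estimate, and an exact telescoping identity for the $L^\infty$ estimate.

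For part (i), I would start from the orthogonality relation \eqref{def:LegOrth}, which turns the truncation error into
\[
\|f-f_n\|_{L^2(\Omega)}^2 = \sum_{k=n+1}^\infty a_k^2 \int_\Omega P_k(x)^2\,\mathrm{d}x = \sum_{k=n+1}^\infty \frac{a_k^2}{k+1/2}.
\]
Since $n\ge m+1$, every index $k\ge n+1$ satisfies $k\ge m+1$, so Theorem~\ref{thm:NewBound} yields $a_k^2 \le \frac{2V_m^2}{\pi(k-m)}\prod_{j=1}^m (k-j+1/2)^{-2}$. The denominator of $a_k^2/(k+1/2)$ is then a product of $2m+2$ linear factors, namely $(k-m)$, $(k+1/2)$, and $(k-j+1/2)^2$ for $j=1,\dots,m$, each of which is at least $k-m$; hence $\frac{a_k^2}{k+1/2}\le \frac{2V_m^2}{\pi}(k-m)^{-(2m+2)}$. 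Because $x\mapsto (x-m)^{-(2m+2)}$ is positive and decreasing on $(m,\infty)$, I would bound the tail by $\int_n^\infty (x-m)^{-(2m+2)}\,\mathrm{d}x = \frac{(n-m)^{-(2m+1)}}{2m+1}$, giving $\|f-f_n\|_{L^2(\Omega)}^2 \le \frac{2V_m^2}{\pi(2m+1)(n-m)^{2m+1}}$; taking square roots and using $\frac{2}{2m+1}=\frac{1}{m+1/2}$ produces \eqref{eq:MeanError} (the empty-product case $m=0$ is covered verbatim).

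For part (ii), I would use $|P_k(x)|\le 1$ on $\Omega$ and the triangle inequality to obtain $\|f-f_n\|_{L^\infty(\Omega)} \le \sum_{k=n+1}^\infty |a_k|$, the absolute convergence being guaranteed by Theorem~\ref{thm:NewBound} once $m\ge 1$. Inserting the coefficient bound, the problem reduces to summing $\frac{1}{\sqrt{k-m}}\prod_{j=1}^m (k-j+1/2)^{-1}$. The key device is the telescoping identity
\[
\sum_{k=n+1}^\infty \prod_{j=1}^p \frac{1}{k-j+1/2} = \frac{1}{p-1}\prod_{i=1}^{p-1}\frac{1}{n-i+1/2}, \qquad p\ge 2,
\]
which I would prove by writing the summand as $\frac{1}{p-1}\bigl[D(k)-D(k+1)\bigr]$ with $D(k)=\prod_{j=2}^p (k-j+1/2)^{-1}$ and collapsing the telescope. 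For $m\ge 2$, bounding the monotone factor $\frac{1}{\sqrt{k-m}}\le \frac{1}{\sqrt{n+1-m}}$ for $k\ge n+1$ and then applying the identity with $p=m$ produces exactly the product $\prod_{k=1}^{m-1}(n-k+1/2)^{-1}$ and the constant $1/(m-1)$ in \eqref{eq:MaxError}.

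The case $m=1$ must be handled separately, since the identity degenerates as $1/(p-1)$ blows up at $p=1$. Here the sum is $\sum_{k=n+1}^\infty \frac{1}{\sqrt{k-1}(k-1/2)}$, which I would bound by $\int_n^\infty \frac{\mathrm{d}x}{\sqrt{x-1}(x-1/2)} \le \int_n^\infty (x-1)^{-3/2}\,\mathrm{d}x = \frac{2}{\sqrt{n-1}}$, using $x-1/2\ge x-1$; multiplying by $\frac{2V_1}{\sqrt{2\pi}}$ gives the stated $m=1$ bound. The main obstacle is locating the telescoping identity above, since it is precisely what transfers the tail product into a product evaluated at $n$ and generates the constant $1/(m-1)$; its degeneration at $p=1$ is the structural reason the theorem separates the $m=1$ case.
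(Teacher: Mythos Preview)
Your proposal is correct and matches the paper's approach essentially step for step: part~(i) is handled identically (orthogonality, the coefficient bound of Theorem~\ref{thm:NewBound}, the crude factor-by-factor estimate collapsing everything to $(k-m)^{-(2m+2)}$, then integral comparison), and for part~(ii) the paper simply says ``it follows by combining \eqref{eq:NewBound} with $|P_k(x)|\le 1$'' without writing out the tail summation. Your telescoping identity for $p=m\ge 2$ and the integral bound for $m=1$ are exactly the natural way to fill in that omitted computation, and they reproduce the stated constants precisely.
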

\begin{proof}
As for \eqref{eq:MeanError}, recalling the orthogonal property of
Legendre polynomials \eqref{def:LegOrth}, we find that
\begin{align}
\|f - f_n\|_{L^2(\Omega)}^2 &= %\int_{\Omega}\left(f-f_n\right)^2 \mathrm{d}x =
\sum_{k=n+1}^{\infty} (a_k)^2 \left(k + \frac{1}{2} \right)^{-1}.
\nonumber
\end{align}
Furthermore, using Theorem \ref{thm:NewBound} we obtain
\begin{align}
\|f - f_n\|_{L^2(\Omega)}^2 \leq \frac{2V_m^2}{\pi}
\sum_{k=n+1}^{\infty} \frac{1}{(k-m)^{2m+2}} &\leq
\frac{2V_m^2}{\pi}
\int_{n}^{\infty} \frac{1}{(x-m)^{2m+2}}    \nonumber \\
&= \frac{V_m^2}{\pi(m+1/2)(n-m)^{2m+1}} .   \nonumber
\end{align}
Taking the square root on both sides gives \eqref{eq:MeanError}. As
for \eqref{eq:MaxError}, it follows by combining \eqref{eq:NewBound}
in Theorem \ref{thm:NewBound} with the fact that $|P_k(x)|\leq1$.
This ends the proof.
\end{proof}

\begin{figure}[h]
\centering
\begin{overpic}
[width=7cm,height=6.cm]{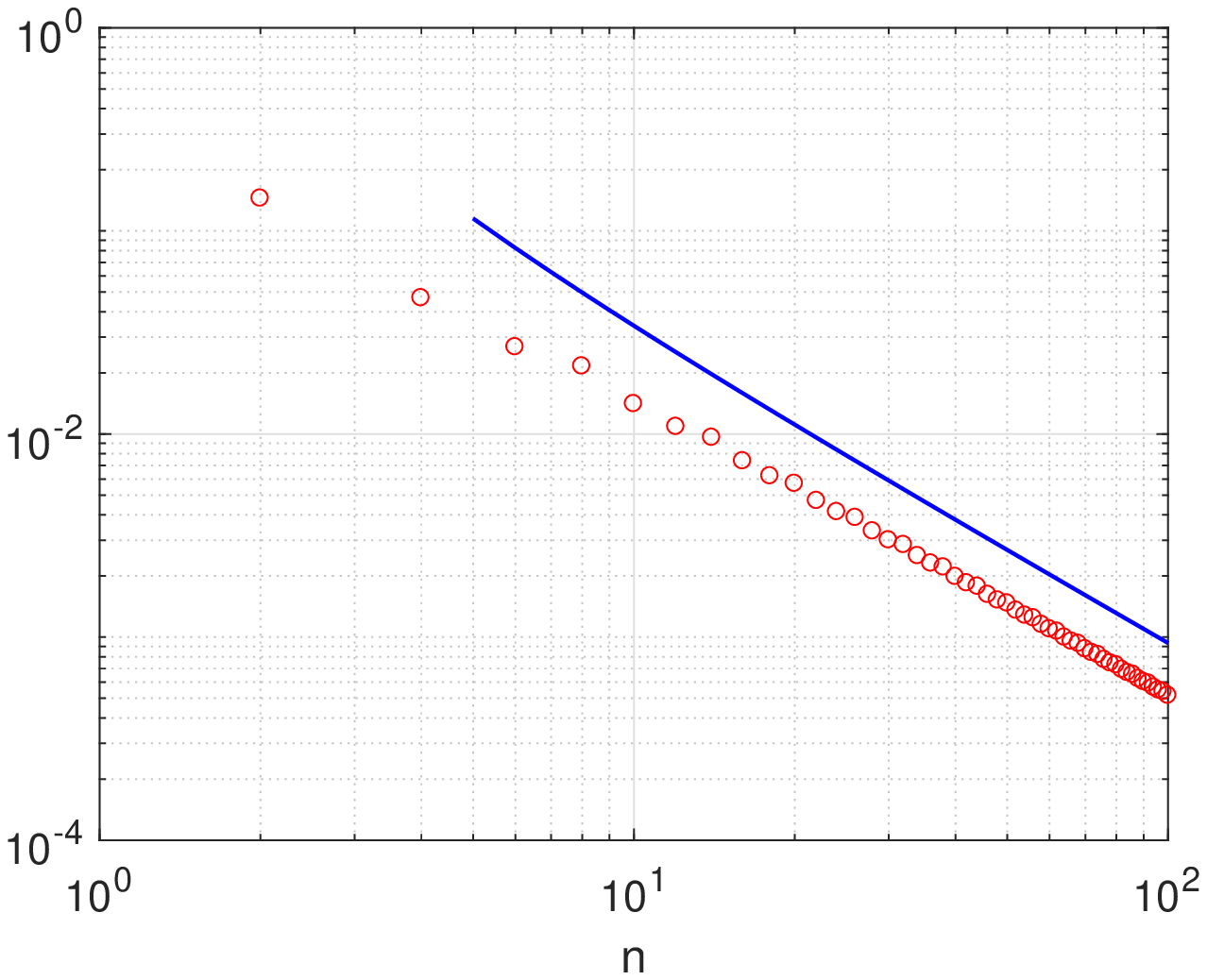}
\end{overpic}
\begin{overpic}
[width=7cm,height=6.cm]{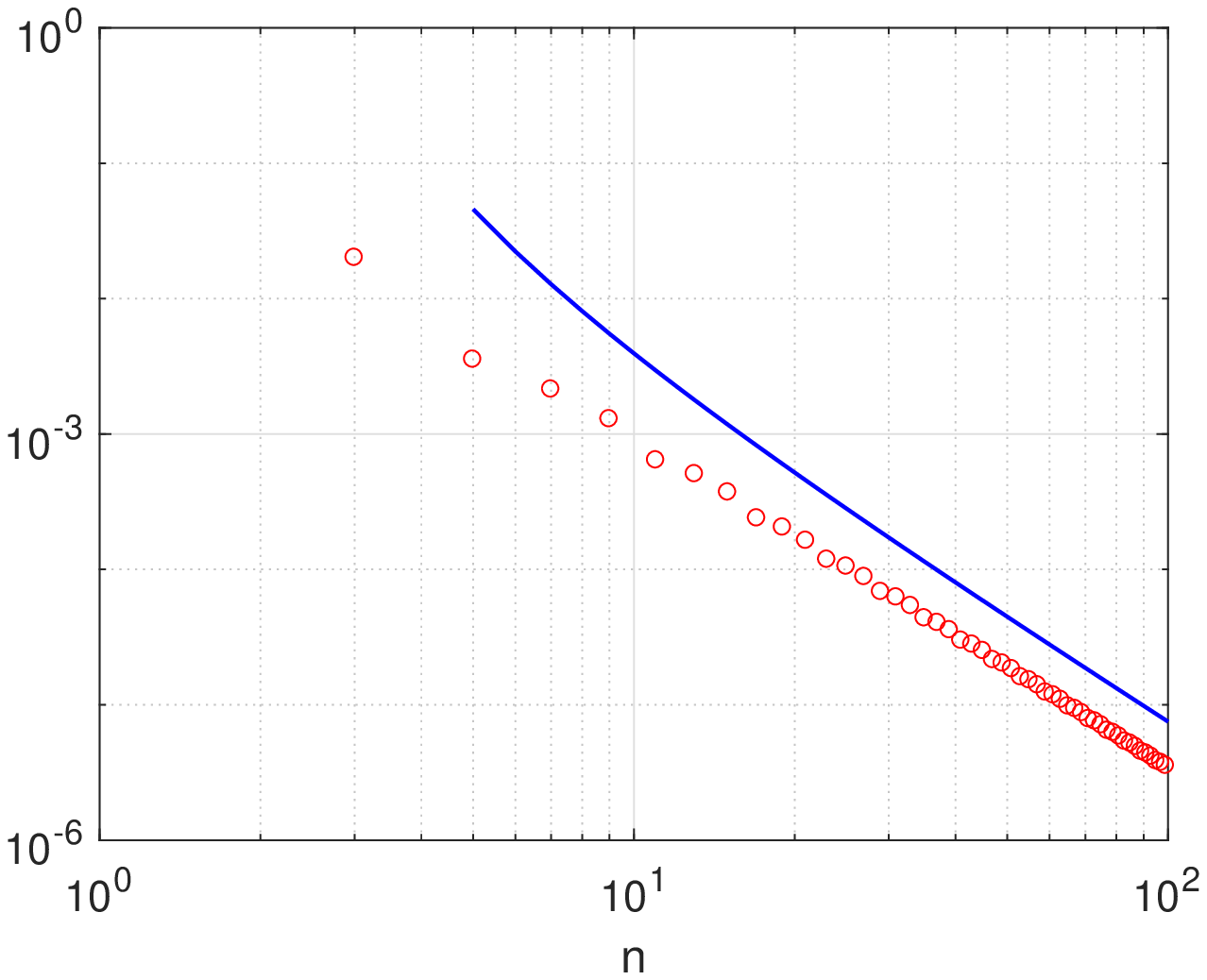}
\end{overpic}
\caption{The error $\|f-f_n\|_{L^2(\Omega)}$ (circles) and the error
bound (line) for $f(x)=|x-0.5|$ (left) and $f(x)=(x-0.5)_{+}^2$
(right).} \label{fig:LegExam3}
\end{figure}

In Figure \ref{fig:LegExam3} we show the actual error of $f_n$ and
the error bound \eqref{eq:MeanError} in the $L^2$ norm as a function
of $n$ for two test functions. Clearly, we can see that the error
bound \eqref{eq:MeanError} is optimal up to a constant factor.

\begin{remark}
Recently, the following error bound was proved in
\cite[Theorem~3.4]{liu2020legendre}
\begin{align}\label{eq:Liu}
\|f - f_n\|_{L^2(\Omega)} \leq \frac{V_m}{\sqrt{\pi(m+1/2)}}
\sqrt{\frac{\Gamma(n-m)}{\Gamma(n+m+1)}}, \quad n\geq m+1.
\end{align}
Comparing \eqref{eq:MeanError} with \eqref{eq:Liu}, it is easily
verified that their ratio asymptotes to one as $n\rightarrow\infty$
and thus both bounds are almost identical for large $n$. Whenever
$n$ is small, direct calculations show that \eqref{eq:Liu} is
slightly sharper than \eqref{eq:MeanError}. On the other hand, it is
easily seen that our bound \eqref{eq:MeanError} is more explicit and
informative.
\end{remark}

\begin{remark}
In the case where $f$ is piecewise analytic on $\Omega$ and has
continuous derivatives up to order $m-1$ for some $m\in\mathbb{N}$,
the current author has proved in \cite[Theorem~3]{wang2021legendre}
that the optimal rate of convergence of $f_n$ is $O(n^{-m})$. For
such functions, however, the predicted rate of convergence by the
error bound \eqref{eq:MaxError} is only $O(n^{-m+1/2})$. Hence, the
error bound \eqref{eq:MaxError} is suboptimal in the sense that it
overestimates the actual error by a factor of $n^{1/2}$.
\end{remark}

We further ask: How to derive an optimal error bound of $f_n$ in the
$L^{\infty}$ norm? In the following, we shall establish a weighted
inequality for the error of $f_n$ in the $L^{\infty}$ norm and an
explicit and optimal error bound for $f_n$ in the $L^{\infty}$ norm
under the condition that the maximum error of $f_n$ is attained in
the interior of $\Omega$.
\begin{theorem}\label{thm:MaxBound}
If $f,f{'},\ldots,f^{(m-1)}$ are absolutely continuous on $\Omega$
and $f^{(m)}$ is of bounded variation on $\Omega$ for some
$m\in\mathbb{N}$, i.e., $V_m=\mathcal{V}(f^{(m)},\Omega)<\infty$.
\begin{itemize}
\item[\rm (i)] For $n\geq m$, we have
\begin{align}\label{eq:MaxBoundW}
\max_{x\in\Omega} (1-x^2)^{1/4}|f(x) - f_n(x)| \leq
\frac{2V_m}{m\pi} \prod_{j=1}^{m} \frac{1}{n-j+1/2}.
\end{align}

\item[\rm (ii)] If the maximum error of $f_n$ is
attained at $\tau\in(-1,1)$ for $n\geq n_0$. Then, for $n\geq
\max\{n_0,m\}$, we have
\begin{align}\label{eq:MaxBoundM}
\|f - f_n\|_{L^{\infty}(\Omega)} \leq \frac{2V_m}{\pi
m(1-\tau^2)^{1/4}} \prod_{j=1}^{m} \frac{1}{n-j+1/2}.
\end{align}
\end{itemize}
\end{theorem}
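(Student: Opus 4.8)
The plan is to obtain part (ii) as an immediate consequence of part (i), so that the whole theorem rests on the weighted estimate \eqref{eq:MaxBoundW}. Indeed, under the hypothesis of (ii) the maximum error is attained at an interior point $\tau\in(-1,1)$, whence
\[
\|f-f_n\|_{L^\infty(\Omega)}=|f(\tau)-f_n(\tau)|=\frac{(1-\tau^2)^{1/4}|f(\tau)-f_n(\tau)|}{(1-\tau^2)^{1/4}}\le\frac{1}{(1-\tau^2)^{1/4}}\max_{x\in\Omega}(1-x^2)^{1/4}|f(x)-f_n(x)|,
\]
and substituting \eqref{eq:MaxBoundW} yields \eqref{eq:MaxBoundM} directly. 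Thus I would concentrate all effort on (i).

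For \eqref{eq:MaxBoundW} I would start from the tail expansion $f-f_n=\sum_{k=n+1}^\infty a_kP_k$ and apply the triangle inequality, combining the coefficient bound \eqref{eq:NewBound} of Theorem~\ref{thm:NewBound} with the classical sharp weighted bound $(1-x^2)^{1/4}|P_k(x)|\le\sqrt{2/(\pi k)}$. Writing $Q(k)=\prod_{j=1}^{m}(k-j+1/2)^{-1}$, the constants collapse cleanly and one is left with
\[
\max_{x\in\Omega}(1-x^2)^{1/4}|f(x)-f_n(x)|\le\frac{2V_m}{\pi}\sum_{k=n+1}^{\infty}\frac{Q(k)}{\sqrt{k(k-m)}}.
\]
In this way \eqref{eq:MaxBoundW} is reduced to the purely numerical inequality $\sum_{k=n+1}^{\infty}Q(k)/\sqrt{k(k-m)}\le Q(n)/m$.

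The heart of the argument is this series bound, which I would prove by exhibiting a telescoping majorant rather than by an integral comparison. Setting $b_k=\tfrac1m Q(k-1)$, the exact difference identity $Q(k-1)-Q(k)=\frac{m}{k-1/2}Q(k-1)$ together with the telescoping ratio $Q(k-1)/Q(k)=(k-1/2)/(k-m-1/2)$ gives $b_k-b_{k+1}=Q(k)/(k-m-1/2)$. Since $\sqrt{k(k-m)}\ge k-m\ge k-m-1/2$ for every $k\ge m+1$, each summand obeys $Q(k)/\sqrt{k(k-m)}\le b_k-b_{k+1}$, and the sum telescopes to $b_{n+1}=Q(n)/m$, which is exactly the required bound.

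The main obstacle, and the reason the telescoping device is needed, is that the naive route fails: a direct integral comparison against $Q$ (equivalently, bounding the summand by $-\tfrac1m Q'$) would require the pointwise inequality $1/\sqrt{k(k-m)}\le\frac1m\sum_{j=1}^{m}(k-j+1/2)^{-1}$, which is already false at $m=1$ since it reduces to $\sqrt{k(k-1)}\ge k-1/2$. The shift to $b_k=Q(k-1)/m$ is precisely what supplies the small amount of missing slack. The only other point to secure is the weighted Legendre bound with constant $\sqrt{2/\pi}$; any standard form (e.g.\ the Bernstein/Lorch inequality, possibly with $k+1/2$ in place of $k$) is no larger than $\sqrt{2/(\pi k)}$ and therefore suffices for the stated constant.
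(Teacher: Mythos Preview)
Your proposal is correct and follows essentially the same approach as the paper: both combine the Bernstein-type inequality $(1-x^2)^{1/4}|P_k(x)|<\sqrt{2/\pi}\,(k+1/2)^{-1/2}$ with the coefficient bound of Theorem~\ref{thm:NewBound}, and then telescope via $b_k=\tfrac1m Q(k-1)$, which is exactly the paper's difference $\tfrac1m\big[\prod_{j=1}^m(k-j-1/2)^{-1}-\prod_{j=1}^m(k-j+1/2)^{-1}\big]$. The only cosmetic differences are that you use the slightly weaker constant $\sqrt{2/(\pi k)}$ (which, as you note, is implied by the $k+1/2$ version) and that you deduce (ii) from (i) by dividing through by $(1-\tau^2)^{1/4}$, whereas the paper repeats the series estimate at $x=\tau$; these are equivalent.
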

\begin{proof}
We first consider the proof of \eqref{eq:MaxBoundW}. Recall the
Bernstein-type inequality satisfied by Legendre polynomials (see
\cite{Antonov1981} or \cite[Equation (18.14.7)]{olver2010nist})
\begin{align}\label{eq:Bernstein}
(1-x^2)^{1/4} |P_n(x)| < \sqrt{\frac{2}{\pi}}
\left(n+\frac{1}{2}\right)^{-1/2}, \quad  x\in\Omega,
\end{align}
and the bound on the right hand side is optimal in the sense that
$(n+1/2)^{-1/2}$ can not be improved to $(n+1/2+\epsilon)^{-1/2}$
for any $\epsilon>0$ and the constant $(2/\pi)^{1/2}$ is best
possible. Consequently, using the above inequality and Theorem
\ref{thm:NewBound}, we deduce that
\begin{align}\label{eq:Step1}
(1-x^2)^{1/4} |f(x) - f_n(x)| &\leq \sqrt{\frac{2}{\pi}}
\sum_{k=n+1}^{\infty} |a_k|
\left(k+\frac{1}{2}\right)^{-1/2} \nonumber \\
&\leq \frac{2V_m}{\pi} \sum_{k=n+1}^{\infty}
\frac{1}{\sqrt{(k-m)(k+1/2)}} \prod_{j=1}^{m} \frac{1}{k-j+1/2}
\nonumber \\
&\leq \frac{2V_m}{m\pi} \sum_{k=n+1}^{\infty} \left[ \prod_{j=1}^{m}
\frac{1}{k-j-1/2} - \prod_{j=1}^{m} \frac{1}{k-j+1/2} \right]
\nonumber \\
&= \frac{2V_m}{m\pi} \prod_{j=1}^{m} \frac{1}{n-j+1/2}.
\end{align}
This proves \eqref{eq:MaxBoundW}. As for \eqref{eq:MaxBoundM}, note
that the maximum error of $f_n$ is attained at $x=\tau$, we have
\begin{align}
\|f - f_n\|_{L^{\infty}(\Omega)} = |f(\tau) - f_n(\tau)| \leq
\sum_{k=n+1}^{\infty} |a_k| |P_k(\tau)|. \nonumber
\end{align}
Combining the last inequality with Theorem \ref{thm:NewBound} and
\eqref{eq:Bernstein} and using a similar process as in
\eqref{eq:Step1} gives \eqref{eq:MaxBoundM}. This ends the proof.
\end{proof}

\begin{remark}
For functions with interior singularities, from the pointwise error
analysis developed in \cite{wang2021gegenbauer,wang2021cheby} we
know that the maximum error of $f_n$ is actually determined by the
errors at these interior singularities for moderate and large values
of $n$. In this case, the error bound in \eqref{eq:MaxBoundM} is
optimal in the sense that it can not be improved with respect to $n$
up to a constant factor; see Figure \ref{fig:LegExam4} for an
illustration.
\end{remark}

In Figure \ref{fig:LegExam4} we show the maximum error of $f_n$ and
the error bound \eqref{eq:MaxBoundM} as a function of $n$ for the
functions $f(x)=|x-0.2|$ and $f(x)=(x-0.5)_{+}^2$. For these two
functions, the maximum errors of $f_n$ are attained at $x=0.2$ and
$x=0.5$, respectively, for moderate and large values of $n$. Thus,
the error bound in \eqref{eq:MaxBoundM} can be applied to these two
examples. We see from Figure \ref{fig:LegExam4} that the error bound
\eqref{eq:MaxBoundM} is optimal with respect to $n$ up to a constant
factor.

\begin{figure}[h]
\centering
\begin{overpic}
[width=7cm,height=6.cm]{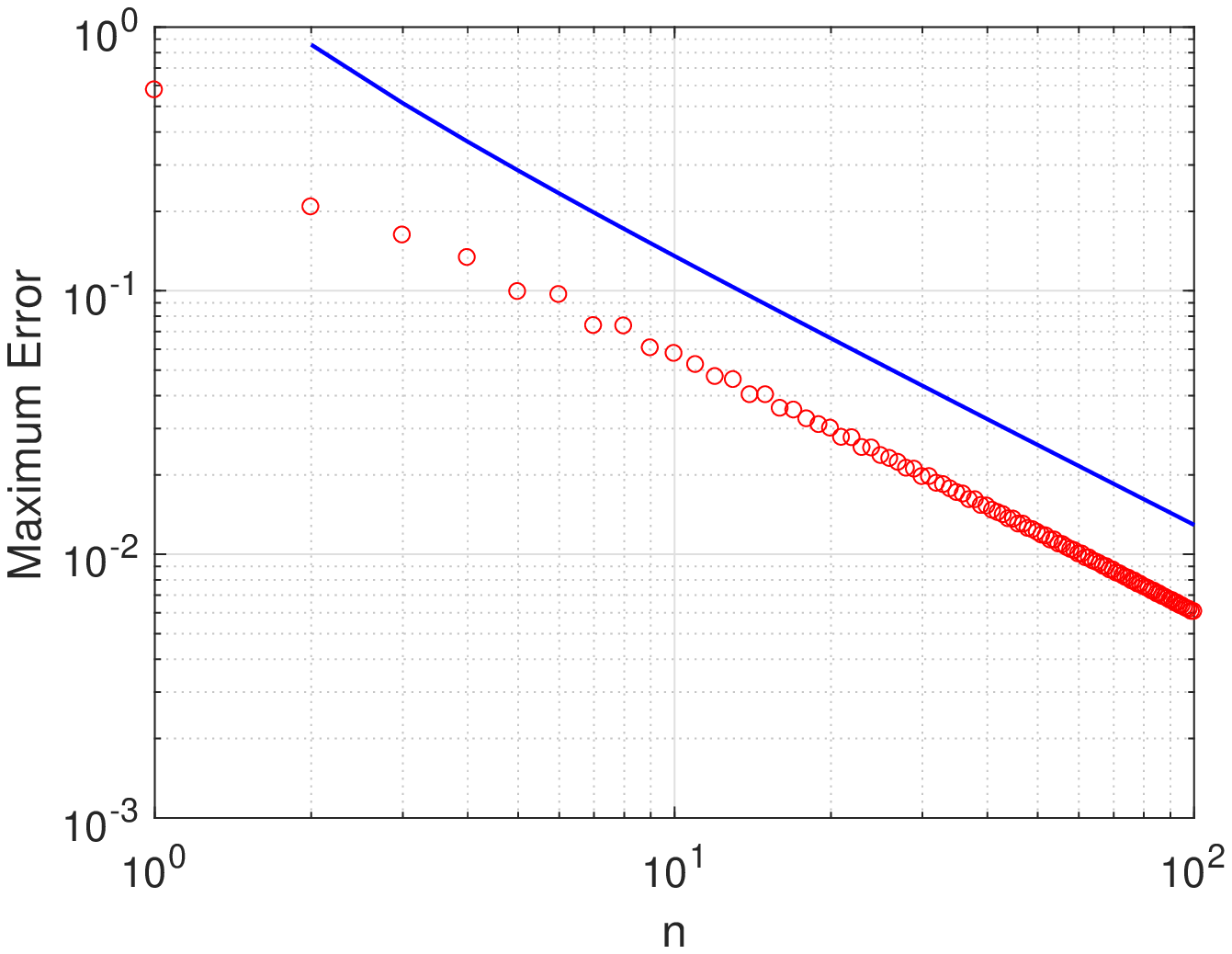}
\end{overpic}
\begin{overpic}
[width=7cm,height=6.cm]{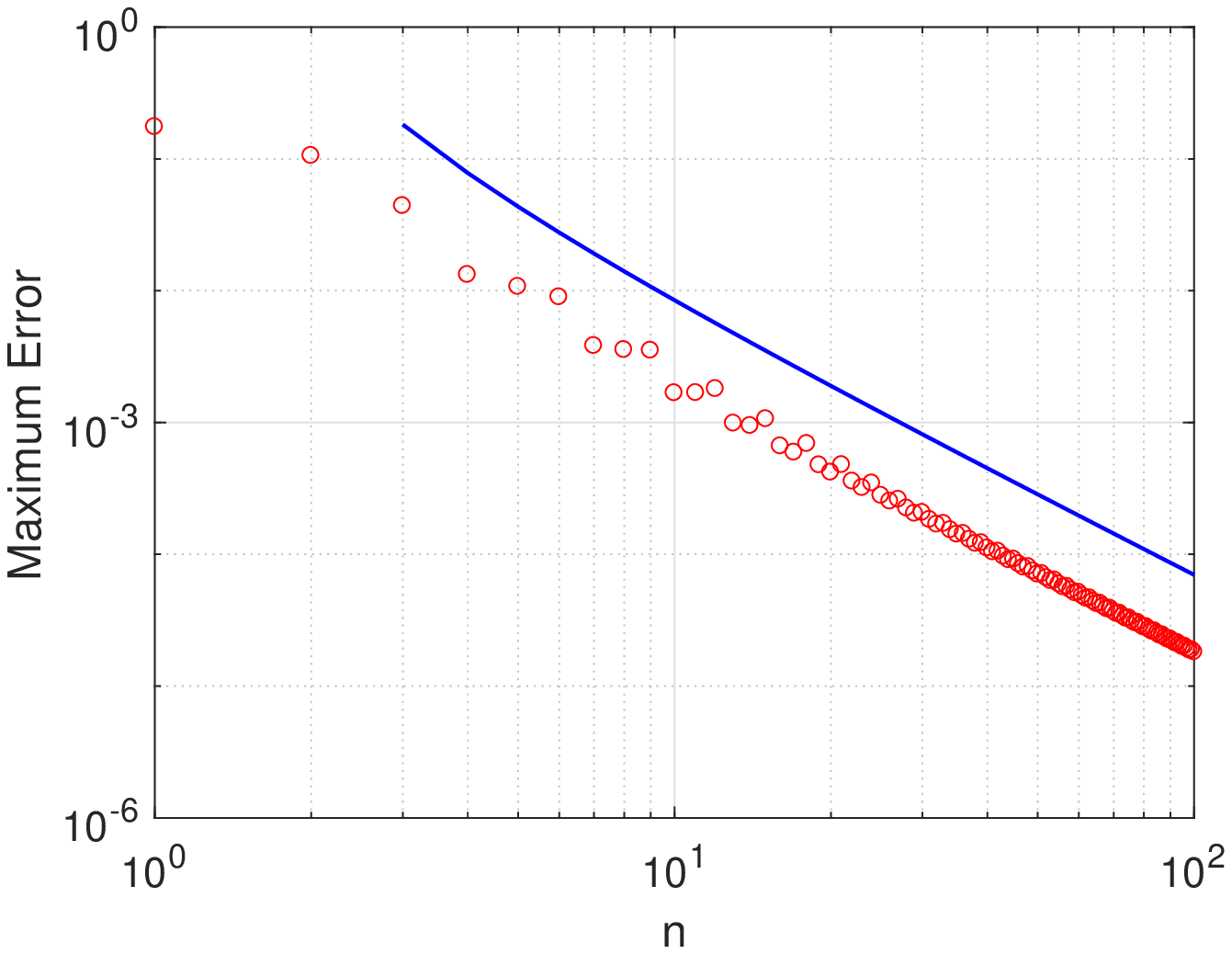}
\end{overpic}
\caption{The maximum error $\|f-f_n\|_{L^{\infty}(\Omega)}$
(circles) and the error bound (line) for $f(x)=|x-0.2|$ (left) and
$f(x)=(x-0.5)_{+}^2$ (right).} \label{fig:LegExam4}
\end{figure}

\section{Extensions}\label{sec:extension}
In this section we present two extensions of our results in Theorem
\ref{thm:PhiFun}, including Gegenbauer-Gauss-Lobatto (GGL) functions
and optimal convergence rates of Legendre-Gauss-Lobatto
interpolation and differentiation for analytic functions.

\subsection{Gegenbauer-Gauss-Lobatto functions}
We introduce the Gegenbauer-Gauss-Lobatto (GGL) functions of the
form
\begin{align}\label{def:GGLFun}
\phi_{n}^{\mathrm{GGL}}(x) &= \frac{n+1}{n+2\lambda}
\omega_{\lambda}(x) C_{n+1}^{\lambda}(x) - \frac{n+2\lambda-1}{n}
\omega_{\lambda}(x) C_{n-1}^{\lambda}(x), \quad n\in\mathbb{N},
\end{align}
where $C_{n}^{\lambda}(x)$ is the Gegenbauer polynomial of degree
$n$ defined in \cite[Equation~(18.5.9)]{olver2010nist} and
$\omega_{\lambda}(x)=(1-x^2)^{\lambda-1/2}$ is the Gegenbauer weight
function. Moreover, by using
\cite[Equation~(18.9.8)]{olver2010nist}, they can also be written as
\begin{align}\label{def:GGLFun2}
\phi_{n}^{\mathrm{GGL}}(x) &=
-\frac{4\lambda(n+\lambda)}{n(n+2\lambda)}\omega_{\lambda+1}(x)C_{n-1}^{\lambda+1}(x).
\end{align}
Notice that $\phi_{n}^{\mathrm{GGL}}(x)=\phi_{n}^{\mathrm{LGL}}(x)$
whenever $\lambda=1/2$ and thus $\phi_{n}^{\mathrm{GGL}}(x)$ can be
viewed as a generalization of $\phi_{n}^{\mathrm{LGL}}(x)$. We are
now ready to prove the following theorem.
\begin{theorem}\label{thm:GGLFun}
Let $\phi_{n}^{\mathrm{GGL}}(x)$ be the function defined in
\eqref{def:GGLFun} or \eqref{def:GGLFun2} and let $\lambda>-1/2$ and
$\lambda\neq0$.
\begin{itemize}
\item[\rm (i)] $|\phi_{n}^{\mathrm{GGL}}(x)|$ is even and $\phi_{n}^{\mathrm{GGL}}(\pm1)=0$ for all $n\in\mathbb{N}$.

\item[\rm (ii)] For all $n\in\mathbb{N}$, the derivative of $\phi_{n}^{\mathrm{GGL}}(x)$ is
\begin{align}\label{eq:GGLD}
\frac{\mathrm{d}}{\mathrm{d}x}\phi_{n}^{\mathrm{GGL}}(x) =
2(n+\lambda)\omega_{\lambda}(x)C_{n}^{\lambda}(x).
\end{align}

\item[\rm (iii)] For all $n\in\mathbb{N}$, the differential recurrence relation of
$\phi_n^{\mathrm{GGL}}(x)$ is
\begin{align}\label{eq:GGLDiffRec}
\phi_{n}^{\mathrm{GGL}}(x) &= \frac{\mathrm{d}}{\mathrm{d}x} \left[
\frac{(n+1)}{2(n+\lambda+1)(n+2\lambda)}
\phi_{n+1}^{\mathrm{GGL}}(x) - \frac{(n+2\lambda-1)
}{2n(n+\lambda-1)} \phi_{n-1}^{\mathrm{GGL}}(x) \right].
\end{align}

\item[\rm (iv)] Let $\nu=\lfloor (n+1)/2 \rfloor$ for all $n\in\mathbb{N}$ and let
$x_{\nu}<\cdots<x_1$ be the zeros of $C_n^{\lambda}(x)$ on the
interval $[0,1]$. Then, $|\phi_{n}^{\mathrm{GGL}}(x)|$ attains its
local maximum values at these points $\{x_k\}_{k=1}^{\nu}$ and
\begin{align}\label{eq:GGLDecSeqI}
|\phi_{n}^{\mathrm{GGL}}(x_1)|<|\phi_{n}^{\mathrm{GGL}}(x_2)|<\cdots<|\phi_{n}^{\mathrm{GGL}}(x_{\nu})|,
\end{align}
for $\lambda>0$ and
\begin{align}\label{eq:GGLDecSeqII}
|\phi_{n}^{\mathrm{GGL}}(x_1)|>|\phi_{n}^{\mathrm{GGL}}(x_2)|>\cdots>|\phi_{n}^{\mathrm{GGL}}(x_{\nu})|,
\end{align}
for $\lambda<0$.

\item[\rm (v)] For $\lambda>0$, the maximum value of $|\phi_{n}^{\mathrm{GGL}}(x)|$ satisfies
\begin{align}\label{eq:GGLIneq}
\max_{x\in\Omega}|\phi_{n}^{\mathrm{GGL}}(x)| \leq
\mathcal{B}_n^{\lambda} := \left\{
\begin{array}{ll}
{\displaystyle \frac{4(n+\lambda)}{n(n+2\lambda)}
\frac{\Gamma(\frac{n+1}{2}+\lambda)}{\Gamma(\frac{n+1}{2})\Gamma(\lambda)}},  & \hbox{$n=1,3,\ldots$,}   \\[15pt]
{\displaystyle \frac{2(n+\lambda)}{\sqrt{n(n+2\lambda)}}
\frac{\Gamma(\frac{n}{2}+\lambda)}{\Gamma(\frac{n+2}{2})\Gamma(\lambda)}},
& \hbox{$n=2,4,\ldots$.}
            \end{array}
            \right.
\end{align}
For $\lambda<0$, the maximum value of $|\phi_{n}^{\mathrm{GGL}}(x)|$
satisfies
\begin{align}
\max_{x\in\Omega}|\phi_{n}^{\mathrm{GGL}}(x)| =
|\phi_{n}^{\mathrm{GGL}}(x_1)| = O(n^{-1}), \quad n\gg1.
\end{align}
\end{itemize}
\end{theorem}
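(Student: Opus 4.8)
The plan is to mirror the proof of Theorem~\ref{thm:PhiFun} almost line for line, treating $\lambda=1/2$ as the template and substituting the Gegenbauer analogue of each Legendre identity; the Legendre case then serves as a running consistency check. Part (i) is immediate from the factored representation \eqref{def:GGLFun2}: since $C_{n-1}^{\lambda+1}(-x)=(-1)^{n-1}C_{n-1}^{\lambda+1}(x)$ and $\omega_{\lambda+1}$ is even, $\phi_n^{\mathrm{GGL}}$ has parity $(-1)^{n-1}$, so $|\phi_n^{\mathrm{GGL}}|$ is even; and since $\lambda+1>1/2$, the factor $\omega_{\lambda+1}(x)=(1-x^2)^{\lambda+1/2}$ vanishes at $x=\pm1$, giving $\phi_n^{\mathrm{GGL}}(\pm1)=0$.

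The foundational step is (ii), the analogue of \eqref{eq:LegRec}. Differentiating \eqref{def:GGLFun2} and using $\frac{\mathrm{d}}{\mathrm{d}x}\omega_{\lambda+1}=-(2\lambda+1)x\,\omega_\lambda$ together with the self-adjoint derivative relation $(1-x^2)\frac{\mathrm{d}}{\mathrm{d}x}C_m^\mu=(m+2\mu-1)C_{m-1}^\mu-mx\,C_m^\mu$ (with $\mu=\lambda+1$), the weight exponents combine and, after applying the three-term recurrence for $C^{\lambda+1}$, the derivative reduces to $2\lambda\,\omega_\lambda\big(C_n^{\lambda+1}-C_{n-2}^{\lambda+1}\big)$. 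The parameter-lowering identity $C_n^\lambda=\frac{\lambda}{n+\lambda}\big(C_n^{\lambda+1}-C_{n-2}^{\lambda+1}\big)$ then collapses this to \eqref{eq:GGLD}. Part (iii), the differential recurrence \eqref{eq:GGLDiffRec}, is an immediate consequence: substituting \eqref{eq:GGLD} for $\frac{\mathrm{d}}{\mathrm{d}x}\phi_{n\pm1}^{\mathrm{GGL}}$ on the right-hand side and simplifying returns the first-form definition \eqref{def:GGLFun}. Finally, differentiating \eqref{eq:GGLD} once more and eliminating $C_{n-1}^\lambda$ via \eqref{def:GGLFun} and the recurrence yields the second-order ODE
\[
(1-x^2)\,\phi''-(1-2\lambda)x\,\phi'+n(n+2\lambda)\,\phi=0,\qquad \phi=\phi_n^{\mathrm{GGL}},
\]
which reduces to $(1-x^2)\phi''+n(n+1)\phi=0$ at $\lambda=1/2$ and drives the remaining parts.

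For (iv) I mirror the auxiliary-function argument \eqref{eq:AuxFun}--\eqref{eq:AuxFunD}. I would define
\[
\psi(x)=\frac{n(n+2\lambda)}{4(n+\lambda)^2}\big(\phi_n^{\mathrm{GGL}}(x)\big)^2+\frac{1-x^2}{4(n+\lambda)^2}\big((\phi_n^{\mathrm{GGL}})'(x)\big)^2 ,
\]
differentiate, and use the ODE to replace $(\phi_n^{\mathrm{GGL}})''$; the cross term cancels precisely because of the constant $\frac{n(n+2\lambda)}{4(n+\lambda)^2}$, leaving the clean identity
\[
\psi'(x)=-\frac{\lambda x}{(n+\lambda)^2}\big((\phi_n^{\mathrm{GGL}})'(x)\big)^2=-4\lambda x\,\omega_\lambda(x)^2\,C_n^\lambda(x)^2 ,
\]
which specialises to $\psi'(x)=-2xP_n(x)^2$ for $\lambda=1/2$, recovering \eqref{eq:AuxFunD}. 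On $[0,1]$ the sign of $\psi'$ is that of $-\lambda$, so $\psi$ is strictly decreasing for $\lambda>0$ and strictly increasing for $\lambda<0$. By \eqref{eq:GGLD} the interior critical points of $|\phi_n^{\mathrm{GGL}}|$ are exactly the zeros of $C_n^\lambda$, and there $\psi$ equals the positive multiple $\frac{n(n+2\lambda)}{4(n+\lambda)^2}(\phi_n^{\mathrm{GGL}})^2$ (note $n+2\lambda>0$ since $\lambda>-1/2$); hence the monotonicity of $\psi$ transfers verbatim to $|\phi_n^{\mathrm{GGL}}|$ along the zeros, giving \eqref{eq:GGLDecSeqI} for $\lambda>0$ and \eqref{eq:GGLDecSeqII} for $\lambda<0$, and locating the global maximum at $x_\nu$ when $\lambda>0$ and at $x_1$ when $\lambda<0$.

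Part (v) splits by the sign of $\lambda$. For $\lambda>0$ and odd $n$ one has $x_\nu=0$, so by \eqref{def:GGLFun2} the maximum is $|\phi_n^{\mathrm{GGL}}(0)|$, and inserting $C_{2m}^{\mu}(0)=(-1)^m\Gamma(\mu+m)/(\Gamma(\mu)\,m!)$ with $m=(n-1)/2$, $\mu=\lambda+1$ evaluates it exactly to $\mathcal{B}_n^\lambda$. For $\lambda>0$ and even $n$, $\phi_n^{\mathrm{GGL}}$ is odd, so $\phi_n^{\mathrm{GGL}}(0)=0$ and $\psi(0)=C_n^\lambda(0)^2$; the monotonicity $\psi(x_\nu)\le\psi(0)$ then gives $\max_{\Omega}|\phi_n^{\mathrm{GGL}}|\le \frac{2(n+\lambda)}{\sqrt{n(n+2\lambda)}}|C_n^\lambda(0)|=\mathcal{B}_n^\lambda$, again via the value $C_{2m}^\lambda(0)$. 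Both reduce to $\chi_n$ and $\zeta_n$ at $\lambda=1/2$. The main obstacle is the remaining case $\lambda<0$: now the maximum sits at the extreme zero $x_1$, and because $\omega_\lambda$ is singular at $\pm1$ the comparison above fails ($\psi(1)=+\infty$), so $\psi$ furnishes no finite closed-form bound. The rate $O(n^{-1})$ must instead be extracted analytically, by combining the localization $1-x_1=O(n^{-2})$ of the largest zero of $C_n^\lambda$ with the endpoint (Mehler--Heine) asymptotics of $C_{n-1}^{\lambda+1}$ in \eqref{def:GGLFun2}: the factor $\omega_{\lambda+1}(x_1)=(1-x_1^2)^{\lambda+1/2}=O(n^{-(2\lambda+1)})$ then balances against the growth of $C_{n-1}^{\lambda+1}(x_1)$ to yield $O(n^{-1})$. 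This asymptotic matching is the only step that is not elementary identity-chasing, and it is where I would expect to spend most of the effort.
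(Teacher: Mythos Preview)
Your proposal is correct and follows essentially the same route as the paper: the same auxiliary function $\psi$ (your second term $(1-x^2)\bigl((\phi_n^{\mathrm{GGL}})'\bigr)^2/(4(n+\lambda)^2)$ is exactly the paper's $(1-x^2)(\omega_\lambda C_n^\lambda)^2$ by part~(ii)), the same odd/even split in (v) for $\lambda>0$, and the same asymptotic balancing at the extreme zero for $\lambda<0$ (the paper cites Szeg\H{o}'s Theorem~8.9.1 for $1-x_1=O(n^{-2})$ and $(C_n^\lambda)'(x_1)=O(n^{2\lambda+1})$, which is your Mehler--Heine input). The only methodological difference is in the computation of $\psi'$: the paper reaches $\psi'=-4\lambda x(\omega_\lambda C_n^\lambda)^2$ by chaining several Gegenbauer recurrences (DLMF 18.9.7, 18.9.8, 18.9.1) in what it calls ``lengthy but elementary calculations'', whereas you first isolate the second-order ODE $(1-x^2)\phi''-(1-2\lambda)x\phi'+n(n+2\lambda)\phi=0$ and let it perform the cancellation --- a cleaner device that also explains the choice of the constant $n(n+2\lambda)/(4(n+\lambda)^2)$. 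For (ii) the paper simply cites DLMF (18.9.20) for $\frac{\mathrm{d}}{\mathrm{d}x}\bigl(\omega_{\lambda+1}C_m^{\lambda+1}\bigr)$ rather than deriving it by hand as you do.
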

\begin{proof}
As for (i), the assertion $|\phi_{n}^{\mathrm{GGL}}(x)|$ is even
follows from the symmetry of Gegenbauer polynomials (i.e.,
$C_k^{\lambda}(-x)=(-1)^kC_k^{\lambda}(x)$ for all $k=0,1,\ldots$)
and $\phi_{n}^{\mathrm{GGL}}(\pm1)=0$ follows from
\eqref{def:GGLFun2}. As for (ii), from
\cite[Equation~(18.9.20)]{olver2010nist} we know that
\begin{align}\label{eq:Deriv}
\frac{\mathrm{d}}{\mathrm{d}x} \left(\omega_{\lambda+1}(x)
C_n^{\lambda+1}(x) \right) =
-\frac{(n+1)(n+2\lambda+1)}{2\lambda}\omega_{\lambda}(x)
C_{n+1}^{\lambda}(x).
\end{align}
The combination of \eqref{eq:Deriv} and \eqref{def:GGLFun2} proves
\eqref{eq:GGLD}. As for (iii), it is a direct consequence of
\eqref{def:GGLFun} and \eqref{eq:GGLD}. Now we consider the proof of
(iv). Since $|\phi_{n,\lambda}^{\mathrm{GGL}}(x)|$ is even, we only
need to consider the maximum values of
$|\phi_{n,\lambda}^{\mathrm{GGL}}(x)|$ at the nonnegative zeros of
$C_n^{\lambda}(x)$. Similar to the argument of Theorem
\ref{thm:PhiFun}, we introduce the following auxiliary function
\begin{align}\label{eq:AuGFun}
\psi(x) = \frac{n(n+2\lambda)}{4(n+\lambda)^2}
\left(\phi_{n}^{\mathrm{GGL}}(x)\right)^2 + (1-x^2)
(\omega_{\lambda}(x) C_n^{\lambda}(x))^2.
\end{align}
Combining \eqref{def:GGLFun2}, \eqref{eq:GGLD} and \eqref{eq:Deriv}
and after some calculations, we get
\begin{align}\label{eq:AuGFun}
\psi{'}(x) &= \frac{n(n+2\lambda)}{(n+\lambda)}
\phi_{n}^{\mathrm{GGL}}(x)\omega_{\lambda}(x)C_{n}^{\lambda}(x)
-2x (\omega_{\lambda}(x) C_n^{\lambda}(x))^2 \nonumber \\
&~~~~~~~~ + 2(1-x^2)\omega_{\lambda}(x) C_n^{\lambda}(x)
\frac{\mathrm{d}}{\mathrm{d}x} (\omega_{\lambda}(x)
C_n^{\lambda}(x)) \nonumber \\
&= 2\omega_{\lambda}(x) C_n^{\lambda}(x) \left[
\frac{n(n+2\lambda)}{2(n+\lambda)}\phi_{n}^{\mathrm{GGL}}(x) -
x\omega_{\lambda}(x) C_n^{\lambda}(x) \right. \nonumber \\
&~~~~~~~~ \left. + (1-x^2)\frac{\mathrm{d}}{\mathrm{d}x} (
\omega_{\lambda}(x) C_n^{\lambda}(x)) \right] \nonumber\\
&= 2\omega_{\lambda}(x) C_n^{\lambda}(x) \bigg[ -2\lambda
\omega_{\lambda+1}(x) C_{n-1}^{\lambda+1}(x)  -
x\omega_{\lambda}(x) C_n^{\lambda}(x) \nonumber \\
&~~~~~~~~ \left. -\frac{(n+1)(n+2\lambda-1)}{2(n+\lambda)}
\omega_{\lambda}(x) (C_{n+1}^{\lambda}(x) - C_{n-1}^{\lambda}(x))
\right],
\end{align}
where we have used \cite[Equation~(18.9.7)]{olver2010nist} in the
last step. Furthermore, invoking
\cite[Equation~(18.9.8)]{olver2010nist} and
\cite[Equation~(18.9.1)]{olver2010nist}, and after some lengthy but
elementary calculations, we obtain that
\begin{align}
\psi{'}(x) = -4\lambda x (\omega_{\lambda}(x) C_n^{\lambda}(x))^2.
\end{align}
It is easily seen that $\psi(x)$ is strictly decreasing on $[0,1]$
whenever $\lambda>0$ and is strictly increasing on $[0,1]$ whenever
$\lambda<0$, and thus the inequalities \eqref{eq:GGLDecSeqI} and
\eqref{eq:GGLDecSeqII} follow immediately. As for (v), we first
consider the case $\lambda>0$. From (iv) we infer that
\begin{align}
\max_{x\in\Omega}|\phi_{n}^{\mathrm{GGL}}(x)| \leq
|\phi_{n}^{\mathrm{GGL}}(x_{\nu})|.  \nonumber
\end{align}
In the case when $n$ is odd, it is easily seen that $x_{\nu}=0$ and
thus
\begin{align}
\max_{x\in\Omega}|\phi_{n}^{\mathrm{GGL}}(x)| \leq
|\phi_{n}^{\mathrm{GGL}}(0)| = \frac{4(n+\lambda)}{n(n+2\lambda)}
\frac{\Gamma(\frac{n+1}{2}+\lambda)}{\Gamma(\frac{n+1}{2})
\Gamma(\lambda)}. \nonumber
\end{align}
This proves the case of odd $n$. In the case when $n$ is even,
notice that $\psi(x)$ is strictly decreasing on the interval $[0,1]$
we obtain
\begin{align}
\psi(x)\leq \psi(0) \quad \Longrightarrow \quad
\max_{x\in\Omega}|\phi_{n}^{\mathrm{GGL}}(x)| \leq
\frac{2(n+\lambda)}{\sqrt{n(n+2\lambda)}}
\frac{\Gamma(\frac{n}{2}+\lambda)}{\Gamma(\frac{n+2}{2})
\Gamma(\lambda)}. \nonumber
\end{align}
This proves the case of even $n$. Finally, we consider the case of
$\lambda<0$. On the one hand, from \eqref{eq:GGLDecSeqII} we obtain
immediately that $\max_{x\in\Omega}|\phi_{n}^{\mathrm{GGL}}(x)| =
|\phi_{n}^{\mathrm{GGL}}(x_1)|$. On the other hand, from
\cite[Theorem~8.9.1]{szego1975orth} we obtain that for $n\gg1$ that
\begin{align}
x_1 = \cos\left(\frac{\pi}{n} + O(1) \right), \quad
\frac{\mathrm{d}}{\mathrm{d}x}C_n^{\lambda}(x)\big|_{x=x_1} =
O(n^{2\lambda+1}). \nonumber
\end{align}
Combining these with \eqref{def:GGLFun2} gives the desired estimate.
This completes the proof.
\end{proof}

As a direct consequence of Theorem \ref{thm:GGLFun}, we have the
following corollary.
\begin{corollary}
For $\lambda\geq1$, we have
\begin{align}\label{eq:MaxNew}
\max_{x\in\Omega}|\omega_{\lambda}(x)C_{n}^{\lambda}(x)| &\leq
\left\{
\begin{array}{ll}
{\displaystyle \frac{1}{\Gamma(\lambda)}\frac{\Gamma(\frac{n}{2}+\lambda)}{\Gamma(\frac{n}{2}+1)}},  & \hbox{$n=0,2,4,\ldots$,}   \\[15pt]
{\displaystyle \sqrt{\frac{n+2\lambda-1}{n+1}}
\frac{\Gamma(\frac{n-1}{2}+\lambda)}{\Gamma(\lambda)\Gamma(\frac{n+1}{2})}},
& \hbox{$n=1,3,5,\ldots$.}
            \end{array}
            \right.
\end{align}
\end{corollary}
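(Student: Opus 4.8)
The plan is to recognize the quantity $\omega_{\lambda}(x)C_{n}^{\lambda}(x)$ as a Gegenbauer-Gauss-Lobatto function carrying the shifted parameter $\lambda-1$, and then to invoke the sharp bound \eqref{eq:GGLIneq} from Theorem \ref{thm:GGLFun}. Reading \eqref{def:GGLFun2} with the parameter replaced by $\lambda-1$ and the index replaced by $n+1$ produces the key identity
\begin{align}
\omega_{\lambda}(x)C_{n}^{\lambda}(x) = -\frac{(n+1)(n+2\lambda-1)}{4(\lambda-1)(n+\lambda)}\,\phi_{n+1}^{\mathrm{GGL}}(x), \nonumber
\end{align}
where $\phi_{n+1}^{\mathrm{GGL}}$ is understood to carry the parameter $\lambda-1$. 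Since this identity divides by $\lambda-1$, it is valid only for $\lambda>1$; the boundary case $\lambda=1$ (where $C_n^{1}=U_n$ is the Chebyshev polynomial of the second kind and $\omega_1(x)C_n^1(\cos\theta)=\sin((n+1)\theta)$ has modulus at most one) and the trivial case $n=0$ (where $\omega_\lambda(x)C_0^\lambda(x)=(1-x^2)^{\lambda-1/2}$ attains maximum one at $x=0$) are each checked directly and seen to meet the claimed bound with equality.

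For $\lambda>1$ and $n\geq1$, the parameter $\lambda-1$ is positive, so taking absolute values in the identity above and applying the $\lambda>0$ branch of \eqref{eq:GGLIneq} to $\phi_{n+1}^{\mathrm{GGL}}$ (parameter $\lambda-1$, index $n+1$) yields
\begin{align}
\max_{x\in\Omega}|\omega_{\lambda}(x)C_{n}^{\lambda}(x)| \leq \frac{(n+1)(n+2\lambda-1)}{4(\lambda-1)(n+\lambda)}\,\mathcal{B}_{n+1}^{\lambda-1}. \nonumber
\end{align}
Here the parity of the index $n+1$ is opposite to that of $n$, so an even $n$ triggers the odd-index line of \eqref{eq:GGLIneq} while an odd $n$ triggers the even-index line; this is precisely the parity split recorded in \eqref{eq:MaxNew}. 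Substituting $\lambda\mapsto\lambda-1$ and $n\mapsto n+1$ into the two cases of \eqref{eq:GGLIneq} and simplifying the arguments (for instance $\tfrac{n+2}{2}+\lambda-1=\tfrac{n}{2}+\lambda$ in the even-$n$ case and $\tfrac{n+1}{2}+\lambda-1=\tfrac{n-1}{2}+\lambda$ in the odd-$n$ case), the explicit prefactors $(n+1)(n+2\lambda-1)/(n+\lambda)$ cancel against the matching factors inside $\mathcal{B}_{n+1}^{\lambda-1}$, leaving only gamma-function ratios to reconcile.

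The remaining step is purely algebraic: I would collapse $1/[(\lambda-1)\Gamma(\lambda-1)]=1/\Gamma(\lambda)$ and, in the odd-$n$ case, use $\Gamma(\tfrac{n+3}{2})=\tfrac{n+1}{2}\,\Gamma(\tfrac{n+1}{2})$ to convert the surviving factor $\sqrt{(n+1)(n+2\lambda-1)}/(n+1)$ into $\sqrt{(n+2\lambda-1)/(n+1)}$, which is exactly the prefactor in the odd-index line of \eqref{eq:MaxNew}; the even-$n$ case reduces immediately to $\Gamma(\tfrac{n}{2}+\lambda)/[\Gamma(\tfrac{n}{2}+1)\Gamma(\lambda)]$. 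The only genuine care required is bookkeeping: keeping the parameter restriction $\lambda>1$ (so that $\lambda-1$ lies in the admissible range $\lambda>0$ of Theorem \ref{thm:GGLFun}) separate from the endpoint $\lambda=1$, tracking the parity swap between $n$ and $n+1$, and matching the two shifted Gamma arguments. I expect this gamma-function reconciliation, rather than any conceptual difficulty, to be the main (and essentially only) obstacle.
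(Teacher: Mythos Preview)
Your proposal is correct and follows exactly the approach the paper intends: the paper's proof is the single sentence ``It follows by combining \eqref{def:GGLFun2} and \eqref{eq:GGLIneq},'' and your parameter shift $\lambda\mapsto\lambda-1$, $n\mapsto n+1$ is precisely how those two ingredients combine. Your explicit treatment of the edge cases $\lambda=1$ and $n=0$ is a welcome addition that the paper's terse proof does not address.
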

\begin{proof}
It follows by combining \eqref{def:GGLFun2} and \eqref{eq:GGLIneq}.
\end{proof}
\begin{remark}
Based on a Nicholson-type formula for Gegenbauer polynomials, Durand
proved the following inequality for $\lambda\geq1$ (see
\cite[Equation~(19)]{durand1975})
\begin{align}\label{eq:MaxBound}
\max_{x\in\Omega}|\omega_{\lambda}(x)C_{n}^{\lambda}(x)| &\leq
\frac{1}{\Gamma(\lambda)}
\frac{\Gamma(\frac{n}{2}+\lambda)}{\Gamma(\frac{n}{2}+1)}, \quad
n=0,1,2,\ldots.
\end{align}
Comparing \eqref{eq:MaxBound} with \eqref{eq:MaxNew}, it is easily
seen that both bounds are the same whenever $n$ is even. In the case
of odd $n$, however, direct calculations show that
\[
\sqrt{\frac{n+2\lambda-1}{n+1}}
\frac{\Gamma(\frac{n-1}{2}+\lambda)}{\Gamma(\frac{n+1}{2})} \leq
\frac{\Gamma(\frac{n}{2}+\lambda)}{\Gamma(\frac{n}{2}+1)}, \quad
n=0,1,\ldots,
\]
and thus we have derived an improved bound for
$\max_{x\in\Omega}|\omega_{\lambda}(x)C_{n}^{\lambda}(x)|$ whenever
$n$ is odd.
\end{remark}

\begin{remark}
Making use of the asymptotic expansion of the ratio of gamma
functions (see, e.g., \cite[Equation~(5.11.13)]{olver2010nist}), it
follows that
\begin{align}
\lim_{n\rightarrow\infty} n^{1-\lambda} \mathcal{B}_n^{\lambda} =
\frac{2^{2-\lambda}}{\Gamma(\lambda)}.  % \quad  \Longrightarrow  \quad
%\eta_n \leq \mathcal{K} \frac{2^{2-\lambda}}{\Gamma(\lambda)}
%n^{\lambda-1},
\end{align}
Clearly, we see that $\mathcal{B}_n^{\lambda}=O(n^{\lambda-1})$ for
large $n$. Direct calculations show that the sequences
$\{\mathcal{B}_{2k-1}^{\lambda}\}_{k=1}^{\infty}$ and
$\{\mathcal{B}_{2k}^{\lambda}\}_{k=1}^{\infty}$ are strictly
decreasing whenever $0<\lambda\leq1$ and the sequences
$\{\mathcal{B}_{2k-1}^{\lambda}\}_{k=2}^{\infty}$ and
$\{\mathcal{B}_{2k}^{\lambda}\}_{k=1}^{\infty}$ are strictly
increasing whenever $\lambda\geq1.2$. For $\lambda\in(1,1.2)$, there
exists a positive integer $\varrho_{\lambda}\in\mathbb{N}$ such that
the sequences
$\{\mathcal{B}_{2k-1}^{\lambda}\}_{k\geq\varrho_{\lambda}}$ and
$\{\mathcal{B}_{2k}^{\lambda}\}_{k\geq \varrho_{\lambda}}$ are
strictly increasing.
\end{remark}

\subsection{Optimal convergence rates of Legendre-Gauss-Lobatto interpolation and differentiation
for analytic functions} Legendre-Gauss-Lobatto interpolation is
widely used in the numerical solution of differential and integral
equations (see, e.g.,
\cite{canuto2006spectral,ern2021,shen2011spectral}). Let $p_n(x)$ be
the unique polynomial which interpolates $f(x)$ at the zeros of
$\phi_n^{\mathrm{LGL}}(x)$ and it is well known that $p_n(x)$ is the
LGL interpolant of degree $n$. If $f$ is analytic inside and on the
ellipse $\mathcal{E}_{\rho}$ for some $\rho>1$, convergence rates of
LGL interpolation and differentiation in the maximum norm have been
thoroughly studied in \cite{xie2013exp} with the help of the
Hermite's contour integral. In particular, the following error
bounds were proved (setting $\lambda=1/2$ in
\cite[Theorem~4.3]{xie2013exp}):
\begin{align}\label{eq:XieI}
\|f-p_n\|_{L^{\infty}(\Omega)} \leq \mathcal{K}
\frac{M(\rho)\sqrt{\pi(\rho^2+\rho^{-2})}(1+\rho^{-2})^{3/2}}{(1-\rho^{-1})^2(\rho-\rho^{-1})^2}
\frac{n^{3/2}}{\rho^{n}},
\end{align}
and
\begin{align}\label{eq:XieII}
\max_{0\leq j\leq n} \left| f{'}(x_j) - p_n{'}(x_j) \right| \leq
\mathcal{K}
\frac{M(\rho)\sqrt{\pi(\rho^2+\rho^{-2})}(1+\rho^{-2})^{3/2}}{4(1-\rho^{-1})^2(\rho-\rho^{-1})^2}
\frac{n^{7/2}}{\rho^{n}},
\end{align}
where $\mathcal{K}\approx1$ is a generic positive constant and
$\{x_j\}_{j=0}^{n}$ are LGL points (i.e., the zeros of
$\phi_n^{\mathrm{LGL}}(x)$). It is clear to see that the above
results imply that the rate of convergence of $p_n(x)$ in the
$L^{\infty}$ norm is $O(n^{3/2}\rho^{-n})$ and the maximum error of
LGL spectral differentiation is $O(n^{7/2}\rho^{-n})$.

In the following, we shall improve the results \eqref{eq:XieI} and
\eqref{eq:XieII} by using Theorem \ref{thm:PhiFun} and show that the
factor $n^{3/2}$ in \eqref{eq:XieI} can actually be removed and the
factor $n^{7/2}$ in \eqref{eq:XieII} can be improved to $n^{3/2}$.
We state our main results in the following theorem.
\begin{theorem}\label{thm:LGLInterpDiff}
If $f$ is analytic inside and on the ellipse $\mathcal{E}_{\rho}$
for some $\rho>1$, then
\begin{align}\label{eq:LGLinterpB}
\|f - p_n\|_{L^{\infty}(\Omega)} \leq \frac{\mathcal{K} \sqrt{2}
M(\rho) L(\mathcal{E}_{\rho})}{\mathrm{d}(\Omega,\mathcal{E}_{\rho})
\pi\sqrt{\rho^2-1}} \frac{1}{\rho^n}.
\end{align}
and
\begin{align}\label{eq:LGLDiffB}
\max_{0\leq j\leq n} \left| f{'}(x_j) - p_n{'}(x_j) \right| &\leq
\frac{\mathcal{K} \sqrt{2} M(\rho)
L(\mathcal{E}_{\rho})}{\mathrm{d}(\Omega,\mathcal{E}_{\rho})
\sqrt{\pi(\rho^2-1)}} \frac{n^{3/2}}{\rho^n},
\end{align}
where $\mathcal{K}$ is a generic positive constant and
$\mathcal{K}\approx1$ for $n\gg1$ and
$\mathrm{d}(\Omega,\mathcal{E}_{\rho})$ denotes the distance from
$\Omega$ to $\mathcal{E}_{\rho}$.
\end{theorem}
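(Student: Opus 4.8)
The plan is to represent both errors by Hermite's contour integral formula on the Bernstein ellipse $\mathcal{E}_\rho$ and to estimate the resulting integrals with the two-sided control of $\phi_n^{\mathrm{LGL}}$ supplied by Theorem~\ref{thm:PhiFun}. Since by construction $p_n$ interpolates $f$ at the $n+1$ zeros of $\phi_n^{\mathrm{LGL}}$, this polynomial agrees up to its leading coefficient with the monic nodal polynomial, so Hermite's formula gives, for $x\in\Omega$,
\[
f(x)-p_n(x)=\frac{1}{2\pi\mathrm{i}}\oint_{\mathcal{E}_\rho}\frac{\phi_n^{\mathrm{LGL}}(x)}{\phi_n^{\mathrm{LGL}}(z)}\,\frac{f(z)}{z-x}\,\mathrm{d}z.
\]
Bounding the integrand by $\max_{z\in\mathcal{E}_\rho}|f(z)|=M(\rho)$, by $|z-x|\geq\mathrm{d}(\Omega,\mathcal{E}_\rho)$ for $x\in\Omega$, by the contour length $L(\mathcal{E}_\rho)$, and by the optimal estimate $\max_{x\in\Omega}|\phi_n^{\mathrm{LGL}}(x)|\leq 4/\sqrt{2\pi n}$ from \eqref{eq:Ineq}, the whole problem reduces to a lower bound for $\min_{z\in\mathcal{E}_\rho}|\phi_n^{\mathrm{LGL}}(z)|$.

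This lower bound on the ellipse is the heart of the argument and the step I expect to be the main obstacle. Writing $z=(u+u^{-1})/2$ with $u=\rho e^{\mathrm{i}\theta}$ and invoking the Laplace--Heine asymptotics $P_m(z)\sim u^{m+1/2}/(\sqrt{2\pi m}\,(z^2-1)^{1/4})$ together with $(z^2-1)^{1/4}=((u-u^{-1})/2)^{1/2}$, I would expand $\phi_n^{\mathrm{LGL}}(z)=P_{n+1}(z)-P_{n-1}(z)$. Because $|u|=\rho>1$, the factor $u^{n+3/2}-u^{n-1/2}=u^{n-1/2}(u^2-1)$ survives without cancellation, and after simplification
\[
\phi_n^{\mathrm{LGL}}(z)\sim\frac{u^{n+1/2}(u-u^{-1})^{1/2}}{\sqrt{\pi n}},\qquad n\to\infty.
\]
Since $|u|=\rho$ is constant on $\mathcal{E}_\rho$ whereas $|u-u^{-1}|^2=(\rho-\rho^{-1})^2\cos^2\theta+(\rho+\rho^{-1})^2\sin^2\theta$ is minimized at $\theta=0,\pi$, the minimum of $|\phi_n^{\mathrm{LGL}}(z)|$ over the ellipse is attained at its two real endpoints, with leading value $\rho^{n+1/2}(\rho-\rho^{-1})^{1/2}/\sqrt{\pi n}=\rho^n\sqrt{\rho^2-1}/\sqrt{\pi n}$. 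The delicate points are to make this lower bound uniform in $\theta$ and to control the relative error $1+O(n^{-1})$ coming from the asymptotic expansion; these corrections are exactly what is absorbed into the generic constant $\mathcal{K}\approx1$.

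Substituting the numerator bound $4/\sqrt{2\pi n}$ and this denominator bound into the contour estimate, the two factors $n^{-1/2}$ cancel and, after the elementary simplification $\rho^{n+1/2}(\rho-\rho^{-1})^{1/2}=\rho^n\sqrt{\rho^2-1}$, the interpolation bound \eqref{eq:LGLinterpB} follows. For the differentiation bound \eqref{eq:LGLDiffB} I would differentiate the displayed Hermite formula in $x$ and evaluate at a node $x_j$; since $\phi_n^{\mathrm{LGL}}(x_j)=0$, only a single term remains, giving
\[
f'(x_j)-p_n'(x_j)=\frac{(\phi_n^{\mathrm{LGL}})'(x_j)}{2\pi\mathrm{i}}\oint_{\mathcal{E}_\rho}\frac{f(z)}{\phi_n^{\mathrm{LGL}}(z)\,(z-x_j)}\,\mathrm{d}z.
\]
By \eqref{eq:LegRec}, $(\phi_n^{\mathrm{LGL}})'(x_j)=(2n+1)P_n(x_j)$ with $|P_n(x_j)|\leq1$, so the numerator is now of order $n$ instead of $n^{-1/2}$; combined with the same denominator lower bound $\rho^n\sqrt{\rho^2-1}/\sqrt{\pi n}$, this produces the additional factor $n^{3/2}$ and hence \eqref{eq:LGLDiffB}, once more with the asymptotic corrections collected into $\mathcal{K}$.
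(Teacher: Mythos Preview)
Your proposal is correct and follows essentially the same route as the paper: Hermite's contour integral on $\mathcal{E}_\rho$, the upper bound \eqref{eq:Ineq} for $\max_{x\in\Omega}|\phi_n^{\mathrm{LGL}}(x)|$, the identity \eqref{eq:LegRec} together with $|P_n|\le 1$ for the derivative at the nodes, and a Laplace--Heine type asymptotic to obtain the lower bound $\min_{z\in\mathcal{E}_\rho}|\phi_n^{\mathrm{LGL}}(z)|\ge \mathcal{K}\rho^n\sqrt{(\rho^2-1)/(n\pi)}$, with the $O(n^{-1})$ corrections absorbed into $\mathcal{K}$. The only cosmetic difference is that you go a step further and locate the asymptotic minimum of $|\phi_n^{\mathrm{LGL}}|$ at $\theta=0,\pi$, which the paper records separately as a conjecture but does not actually use in the proof.
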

\begin{proof}
From the Hermite integral formula
\cite[Theorem~3.6.1]{davis1975interp} we know that the remainder of
the LGL interpolants can be written as
\begin{align}\label{eq:HCI}
f(x) - p_n(x) = \frac{1}{2\pi i}\oint_{\mathcal{E}_{\rho}}
\frac{\phi_n^{\mathrm{LGL}}(x) f(z)}{\phi_n^{\mathrm{LGL}}(z)(z-x)}
\mathrm{d}z,
\end{align}
from which we can deduce immediately that
\begin{align}\label{eq:LGLinterp}
\|f - p_n\|_{L^{\infty}(\Omega)} &\leq
\frac{\max_{x\in\Omega}|\phi_n^{\mathrm{LGL}}(x)|}{\min_{z\in\mathcal{E}_{\rho}}|\phi_n^{\mathrm{LGL}}(z)|}
\times \frac{M(\rho) L(\mathcal{E}_{\rho})}{2\pi
\mathrm{d}(\Omega,\mathcal{E}_{\rho})} \nonumber \\
&= \frac{1}{\min_{z\in\mathcal{E}_{\rho}}|\phi_n^{\mathrm{LGL}}(z)|}
\times \frac{2 M(\rho) L(\mathcal{E}_{\rho})}{\pi
\mathrm{d}(\Omega,\mathcal{E}_{\rho}) \sqrt{2n\pi}},
\end{align}
where we used \eqref{eq:Ineq} in the last step. Moreover, combining
\eqref{eq:HCI} with \eqref{eq:LegRec} we obtain
\begin{align}\label{eq:LGLDiff}
\max_{0\leq j\leq n} \left| f{'}(x_j) - p_n{'}(x_j) \right| &\leq
(2n+1) \frac{\max_{0\leq j\leq
n}|P_n(x_j)|}{\min_{z\in\mathcal{E}_{\rho}}|\phi_n^{\mathrm{LGL}}(z)|}
\times \frac{M(\rho) L(\mathcal{E}_{\rho})}{2\pi
\mathrm{d}(\Omega,\mathcal{E}_{\rho})} \nonumber \\
&=
\frac{(2n+1)}{\min_{z\in\mathcal{E}_{\rho}}|\phi_n^{\mathrm{LGL}}(z)|}
\times \frac{M(\rho) L(\mathcal{E}_{\rho})}{2\pi
\mathrm{d}(\Omega,\mathcal{E}_{\rho})},
\end{align}
where we have used the fact that $|P_k(x)|\leq1$ in the last step.
In order to establish sharp error bounds for the LGL interpolation
and differentiation, it is necessary to find the minimum value of
$|\phi_n^{\mathrm{LGL}}(z)|$ for $z\in\mathcal{E}_{\rho}$. Owing to
\cite[Equation~(5.14)]{wang2018jacobi} we infer that
\begin{align}
P_n(z) = \frac{u^n}{\sqrt{n\pi(1-u^{-2})}} \left[1 + \frac{1}{4n}
\left(\frac{1}{u^2-1} - \frac{1}{2}\right) + O(n^{-2}) \right],
\quad n\gg1, \nonumber
\end{align}
and, after some calculations, we obtain that
\begin{align}\label{eq:LGLMin}
\min_{z\in\mathcal{E}_{\rho}}|\phi_n^{\mathrm{LGL}}(z)| \geq
\mathcal{K} \rho^n \sqrt{\frac{\rho^2-1}{n\pi}},
\end{align}
and $\mathcal{K}\approx1$ for $n\gg1$. Combining this with
\eqref{eq:LGLinterp} and \eqref{eq:LGLDiff} gives the desired
results. This ends the proof.
\end{proof}

\begin{remark}
It is easily seen that the rate of convergence of $p_n(x)$ in the
$L^{\infty}$ norm is $O(\rho^{-n})$, and thus we have improved the
existing result \eqref{eq:XieI}. Moreover, the rate of convergence
of LGL spectral differentiation is $O(n^{3/2}\rho^{-n})$, and thus
we have improved the existing result \eqref{eq:XieII}.
\end{remark}

In the proof of Theorem \ref{thm:LGLInterpDiff}, we have used an
asymptotic estimate of the minimum value of
$|\phi_n^{\mathrm{LGL}}(z)|$ for $z\in\mathcal{E}_{\rho}$. Now we
provide a more detailed observation on this issue. By parameterizing
the ellipse with $z=(\rho e^{i\theta} + (\rho e^{i\theta})^{-1})/2$
with $\rho>1$ and $0\leq\theta<2\pi$, we plot
$|\phi_n^{\mathrm{LGL}}(z)|$ in Figure \ref{fig:Ellipse} for several
values of $\rho$ and $n$. Clearly, we observe that the minimum value
of $|\phi_n^{\mathrm{LGL}}(z)|$ is always attained at
$\theta=0,\pi$. This observation inspires us to raise the following
conjecture:

\vspace{.2cm}

\noindent{\bf Conjecture}: For $n\in\mathbb{N}$ and $\rho>1$,
\begin{align}\label{eq:conjecture}
\min_{z\in\mathcal{E}_{\rho}}\left| \phi_n^{\mathrm{LGL}}(z) \right|
= \left| \phi_n^{\mathrm{LGL}}(z_0) \right|,
\end{align}
where $z_0=\pm(\rho+\rho^{-1})/2$.

\vspace{.2cm}
\begin{figure}[h]
\centering
\begin{overpic}
[width=4.8cm,height=5.0cm]{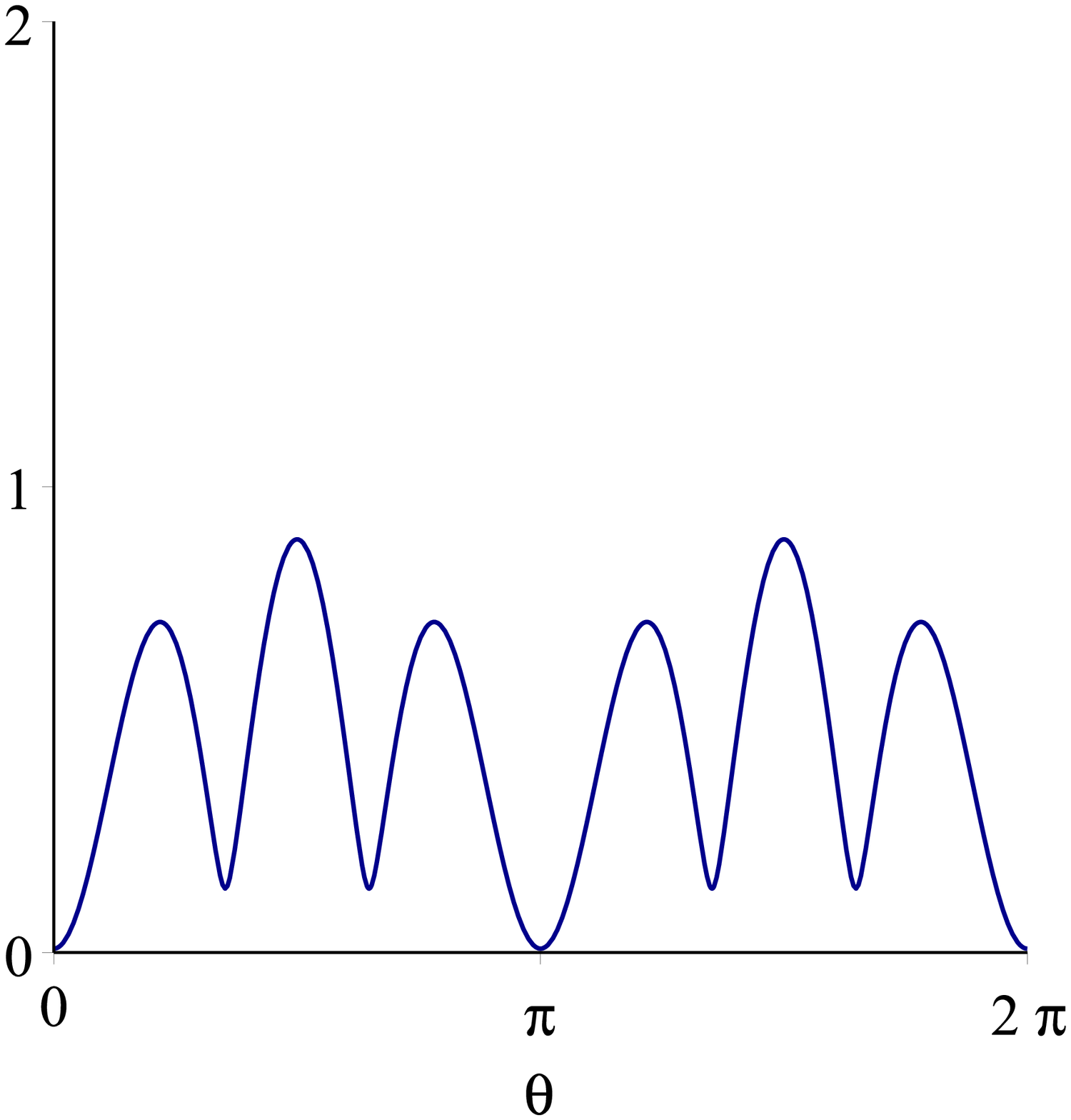}
\end{overpic}
\begin{overpic}
[width=4.8cm,height=5.0cm]{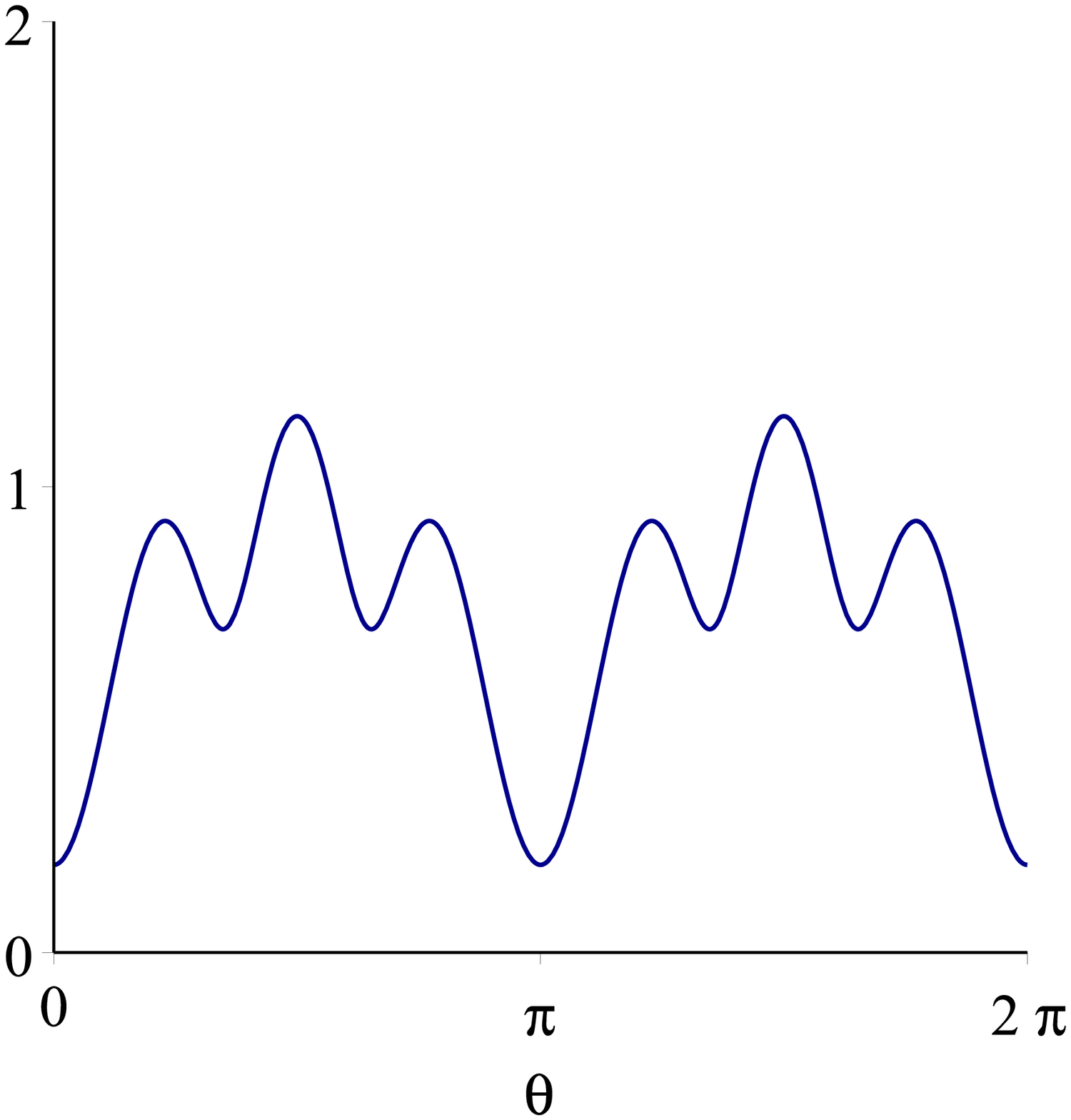}
\end{overpic}
\begin{overpic}
[width=4.8cm,height=5.0cm]{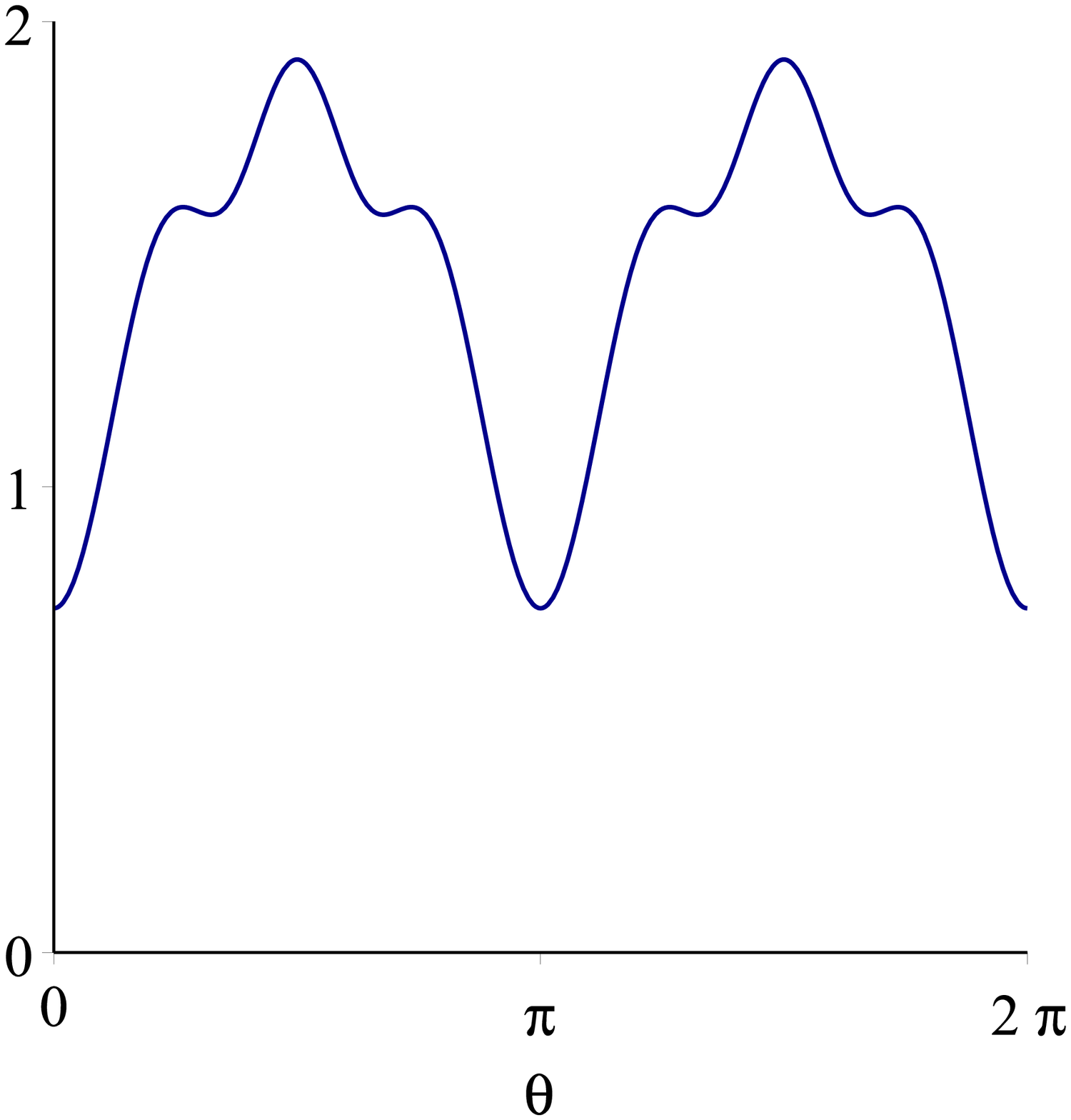}
\end{overpic}\\
\begin{overpic}
[width=4.8cm,height=5.0cm]{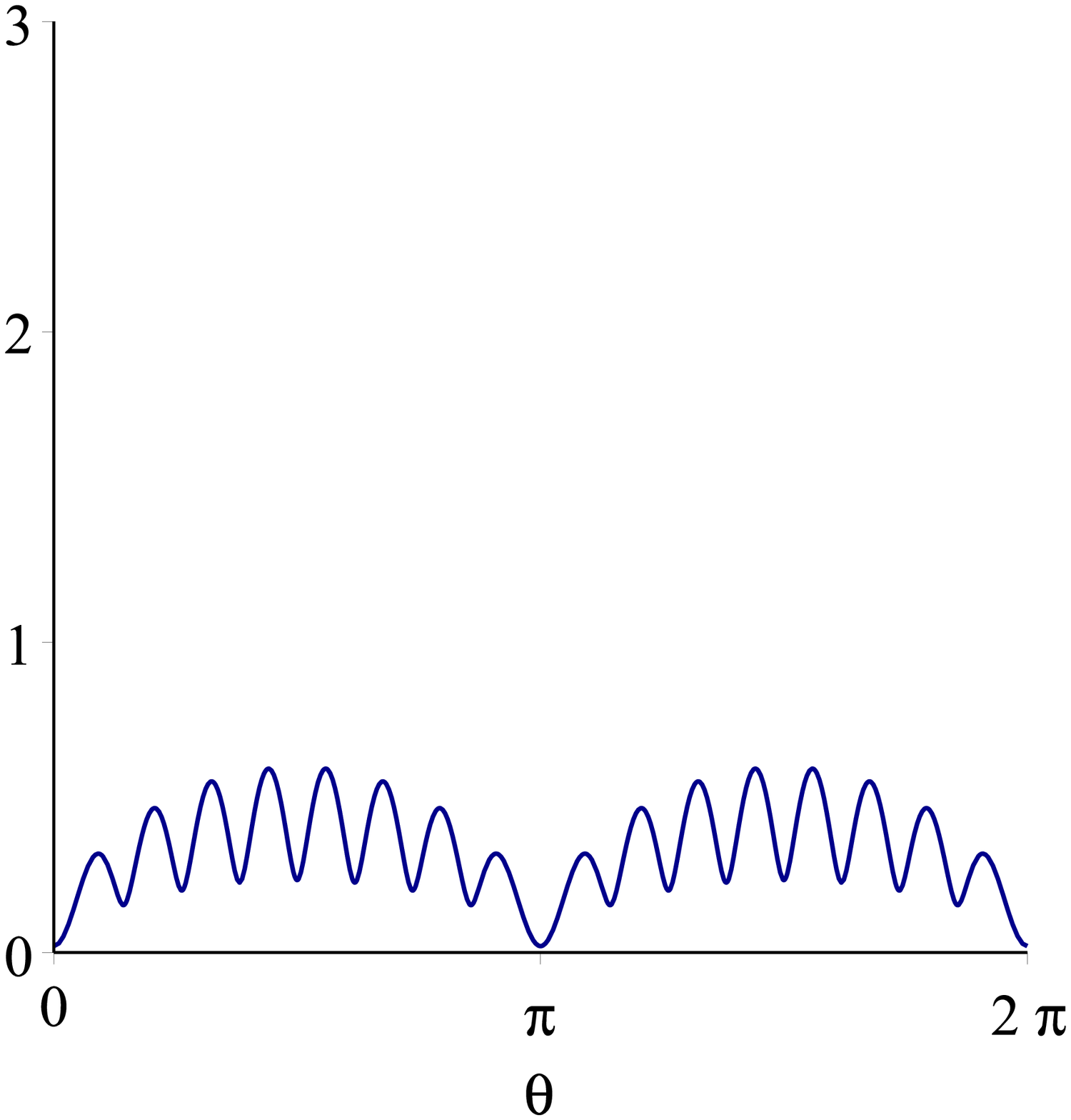}
\end{overpic}
\begin{overpic}
[width=4.8cm,height=5.0cm]{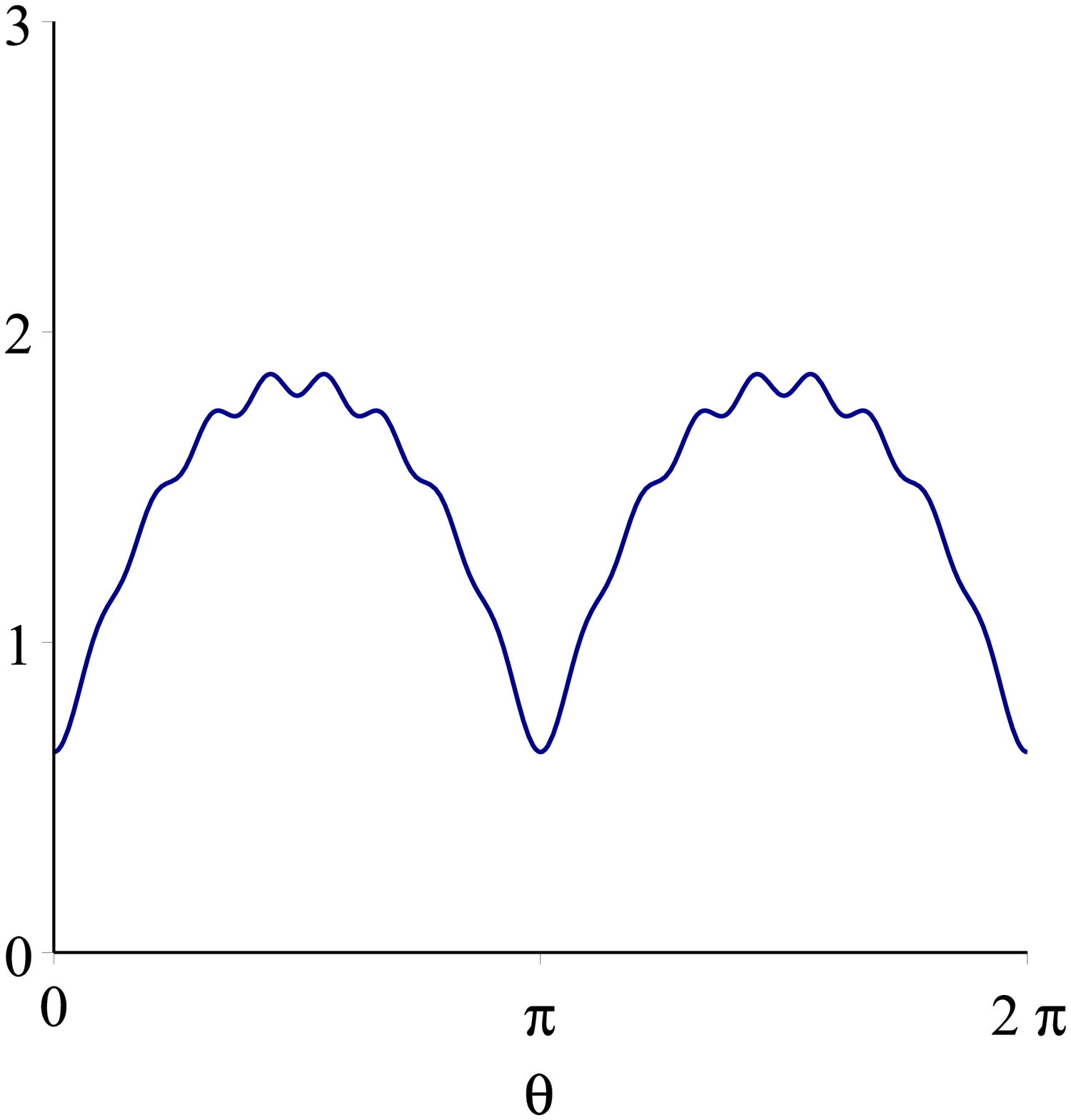}
\end{overpic}
\begin{overpic}
[width=4.8cm,height=5.0cm]{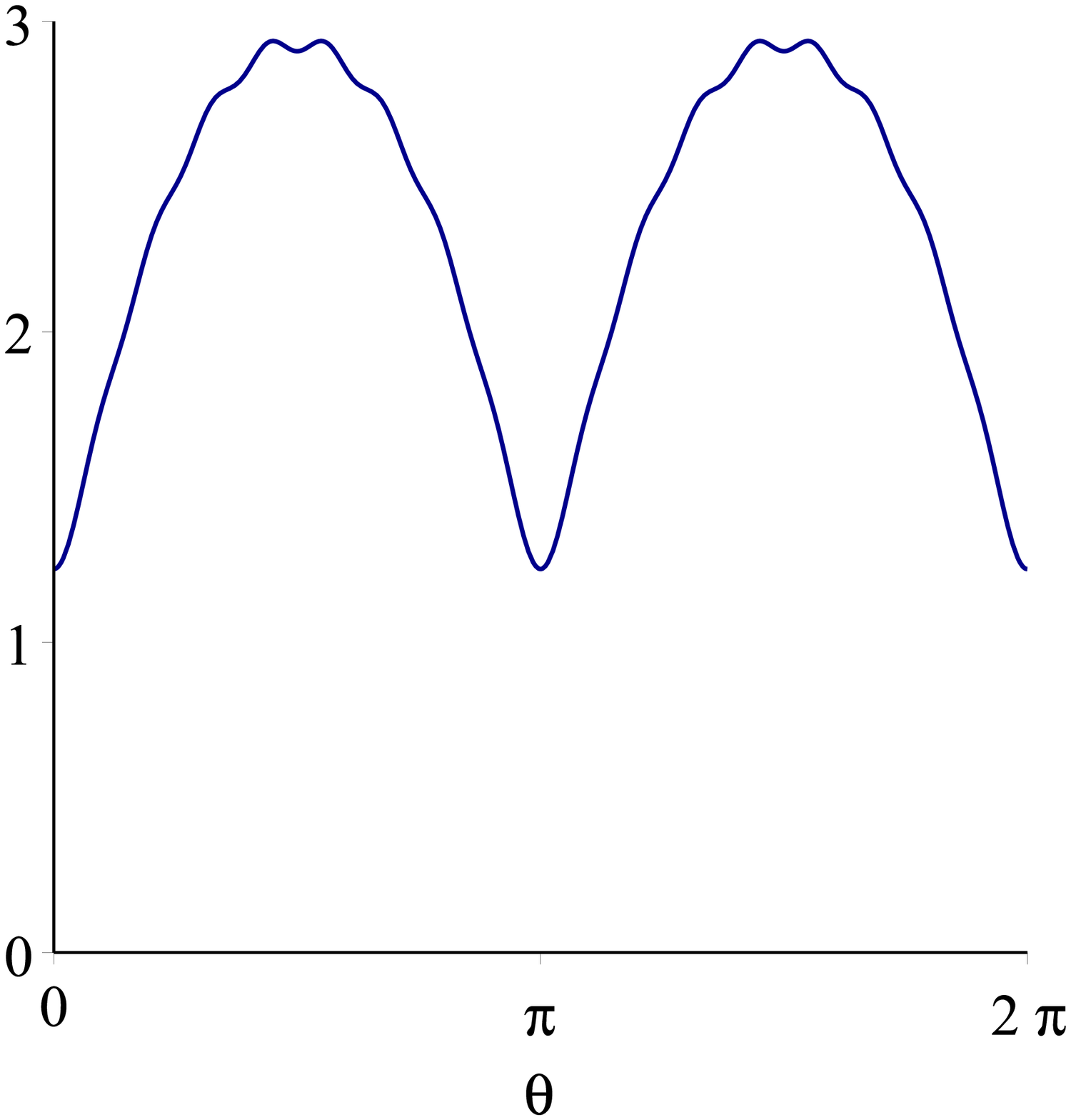}
\end{overpic}
\caption{The plot of $|\phi_n^{\mathrm{LGL}}(z)|$ as a function of
$\theta$. Top row shows $n=3$ and $\rho=1.05$ (left), $\rho=1.25$
(middle) and $\rho=1.5$ (right). Bottom row shows $n=8$ and
$\rho=1.05$ (left), $\rho=1.25$ (middle) and $\rho=1.32$ (right).}
\label{fig:Ellipse}
\end{figure}

To provide some insights into this conjecture, after some
calculations we obtain
\begin{align}
\left| \phi_1^{\mathrm{LGL}}(z) \right| &= \frac{3}{8} \left(\rho^2
+ \frac{1}{\rho^2} - 2\cos(2\theta) \right), \nonumber \\
\left| \phi_2^{\mathrm{LGL}}(z) \right| &= \frac{5}{16} \left[
\left( \left(\rho^2 + \frac{1}{\rho^2}\right)^2 - 4\cos^2(2\theta)
\right) \left(\rho^2 + \frac{1}{\rho^2} - 2\cos(2\theta) \right)
\right]^{1/2}. \nonumber
\end{align}
It is easily seen that the minimum values of $\left|
\phi_1^{\mathrm{LGL}}(z) \right|$ and $\left|
\phi_2^{\mathrm{LGL}}(z) \right|$ are always attained at
$\theta=0,\pi$, which confirms the above conjecture for $n=1,2$. For
$n\geq3$, however, $\left| \phi_n^{\mathrm{LGL}}(z)\right|$ will
involve a rather lengthy expression and it would be infeasible to
find the minimum value of $\left| \phi_n^{\mathrm{LGL}}(z) \right|$
from its explicit expression. We will pursue the proof of this
conjecture in future work.

\begin{example}
We consider the following Runge function
\begin{align}
f(x)= \frac{1}{1+(ax)^2}, \quad  a>0.
\end{align}
It is easily verified that this function has a pair of poles at
$z=\pm i/a$ and thus the rates of convergence of $p_n(x)$ is
$O(\rho^{-n})$ with $\rho=(1+\sqrt{a^2+1})/a$. In Figure
\ref{fig:LGLExam1} we illustrate the maximum errors of LGL
interpolants for two values of $a$. In our implementation, the LGL
interpolants $p_n(x)$ are computed by using the second barycentric
formula
\begin{align}\label{eq:bary}
p_n(x) = \frac{\displaystyle \sum_{j=0}^{n}\frac{w_j}{x-x_j}
f(x_j)}{\displaystyle \sum_{j=0}^{n}\frac{w_j}{x-x_j}},
\end{align}
where $\{w_j\}_{j=0}^{n}$ are the barycentric weights of LGL points
and the algorithm for computing these barycentric weights is
described in \cite{wang2014explicit}. Clearly, we see that numerical
results are in good agreement with our theoretical analysis.
\begin{figure}[h]
\centering
\begin{overpic}
[width=7cm,height=6.cm]{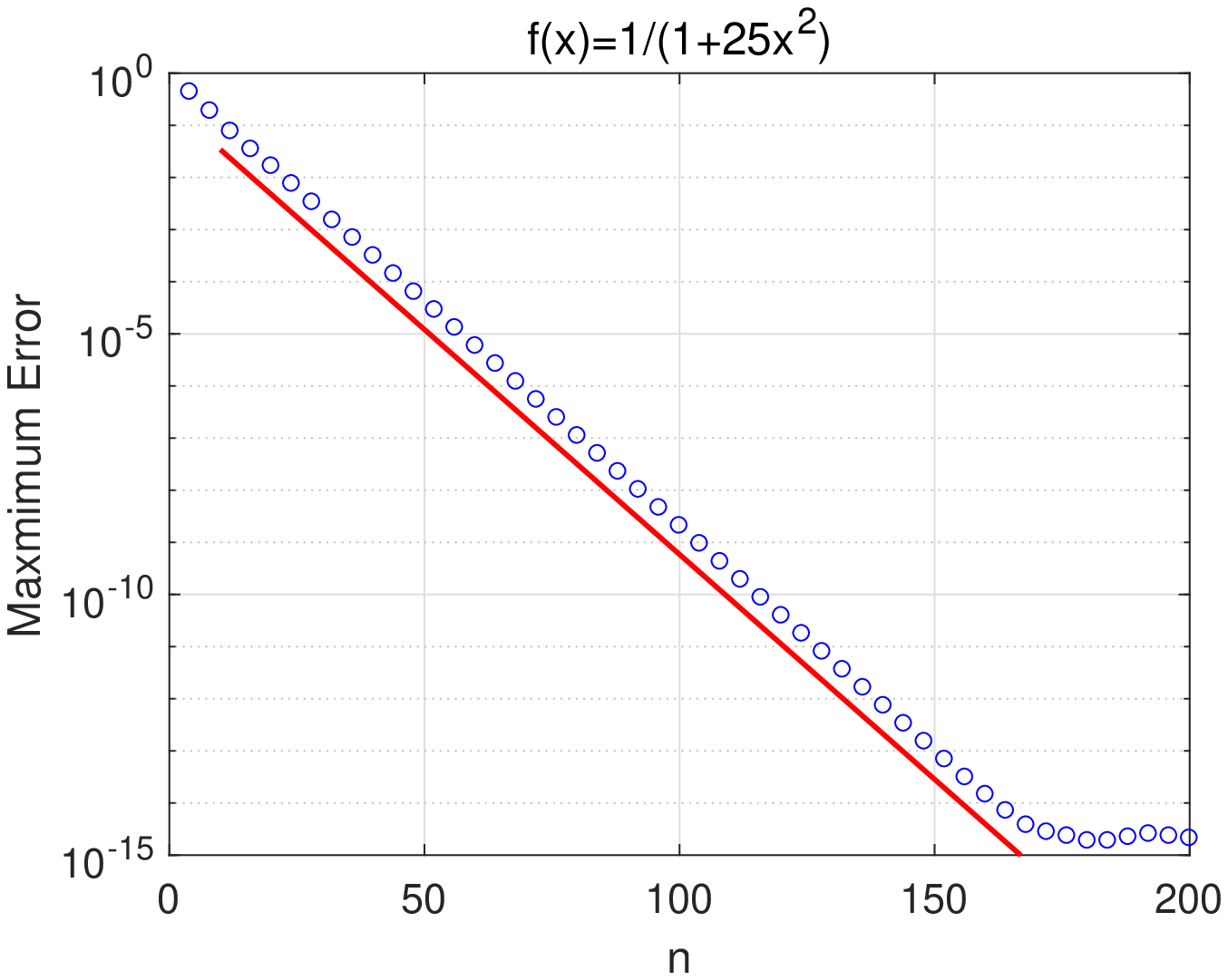}
\end{overpic}
\begin{overpic}
[width=7cm,height=6.cm]{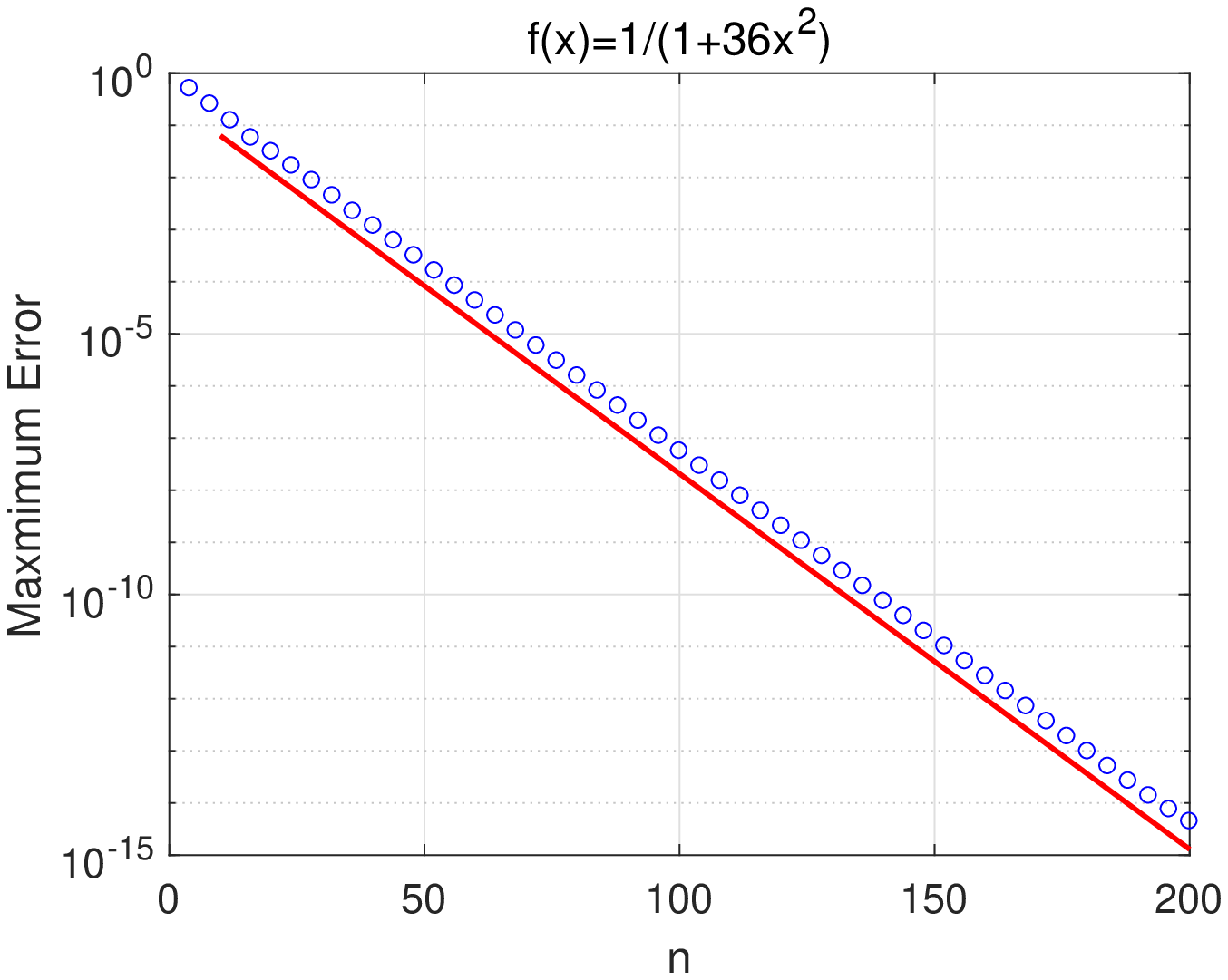}
\end{overpic}
\caption{The maximum error $\|f-p_n\|_{L^{\infty}(\Omega)}$
(circles) and the predicted convergence rate (line) for $a=5$ (left)
and $a=6$ (right).} \label{fig:LGLExam1}
\end{figure}
\end{example}

In Figure \ref{fig:LGLExam2} we illustrate the maximum errors of LGL
spectral differentiation. In our implementation, we first compute
the barycentric weights $\{w_j\}_{j=0}^{n}$ by the algorithm in
\cite{wang2014explicit} and then compute the differentiation matrix
$D$ by
\begin{align}
D_{j,k} = \left\{
\begin{array}{ll}
   {\displaystyle \frac{w_k/w_j}{x_j-x_k}}, & \hbox{$j\neq k$,}   \\[15pt]
   {\displaystyle -\sum_{k\neq j} D_{j,k}}, &\hbox{$j=k$.} % -\sum_{k=0,k\neq j}^{n} D_{j,k}^{(1)}
\end{array}
      \right.
\end{align}
Finally, $(p_n{'}(x_0),\ldots,p_n{'}(x_n))^T$ is evaluated by
multiplying the differentiation matrix $D$ with
$(f(x_0),\ldots,f(x_n))^T$. From \eqref{eq:LGLDiffB} we know that
the predicted rate of convergence of LGL spectral differentiation is
$O(n^{3/2}((1+\sqrt{a^2+1})/a)^{-n})$. Clearly, we can observe from
Figure \ref{fig:LGLExam2} that the predicted rate of convergence is
consistent with the errors of LGL spectral differentiation.

\begin{figure}[h]
\centering
\begin{overpic}
[width=7cm,height=6.cm]{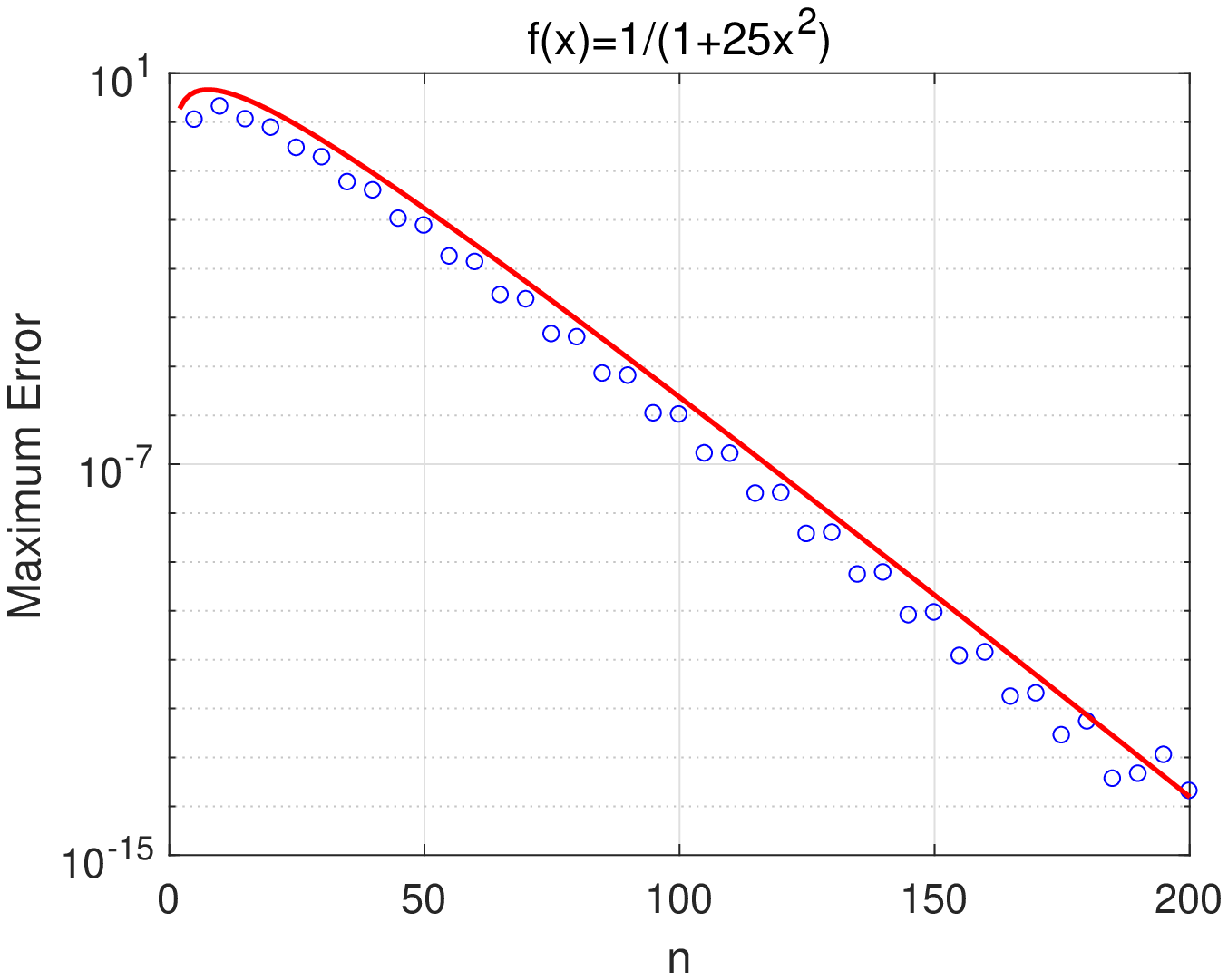}
\end{overpic}
\begin{overpic}
[width=7cm,height=6.cm]{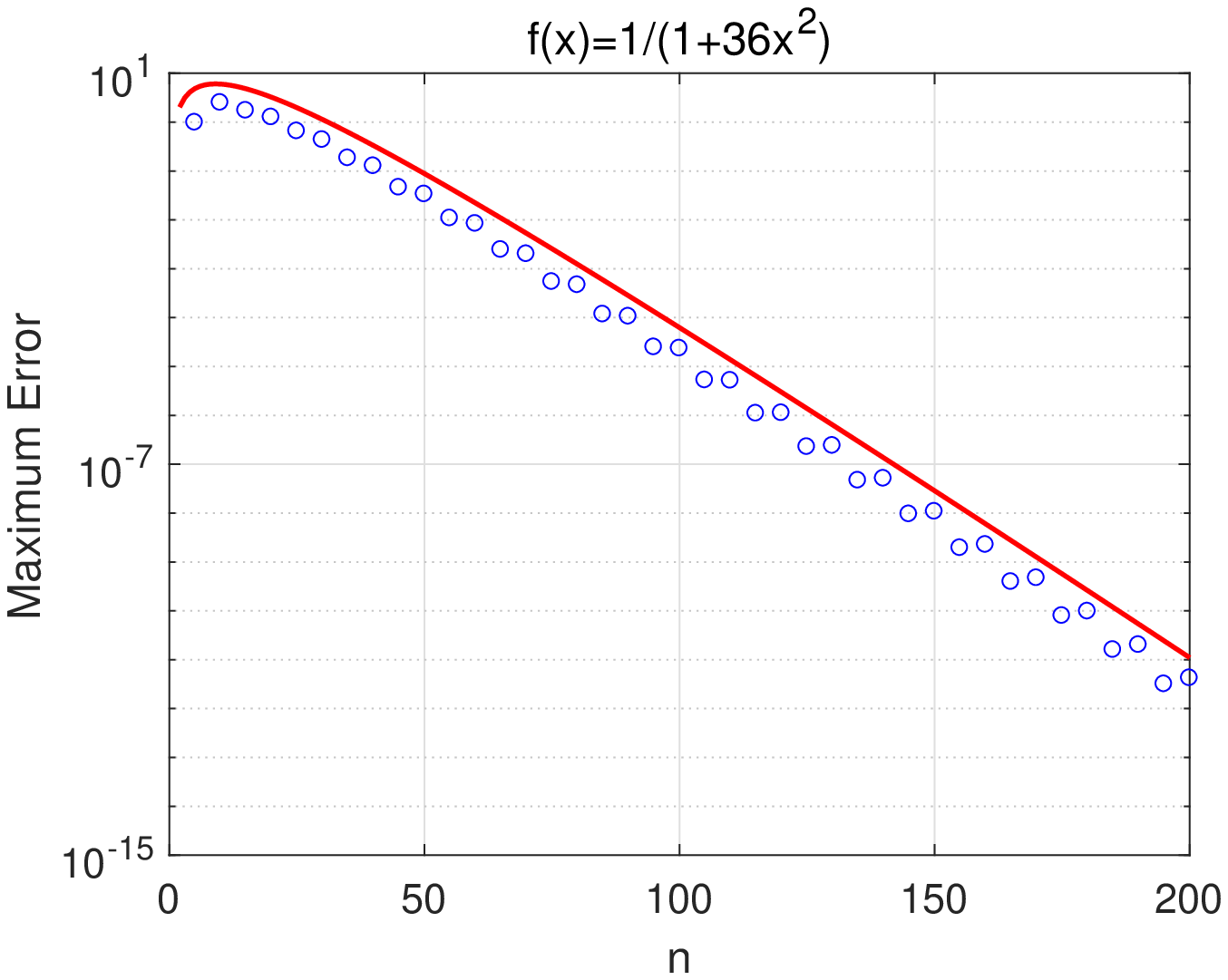}
\end{overpic}
\caption{The maximum error of LGL spectral differentiation (circles)
and the predicted convergence rate (line) for $a=5$ (left) and $a=6$
(right).} \label{fig:LGLExam2}
\end{figure}

\section{Conclusion}\label{sec:Conclusion}
In this paper, we have studied the error analysis of Legendre
approximations for differentiable functions from a new perspective.
We introduced the sequence of LGL polynomials
$\{\phi_0^{\mathrm{LGL}}(x),\phi_1^{\mathrm{LGL}}(x),\ldots\}$ and
proved their theoretical properties. Based on these properties, we
derived a new explicit bound for the Legendre coefficients of
differentiable functions. We then obtained an optimal error bound of
Legendre projections in the $L^2$ norm and an optimal error bound of
Legendre projections in the $L^{\infty}$ norms under the condition
that the maximum error of Legendre projections is attained in the
interior of $\Omega$. Numerical examples were provided to
demonstrate the sharpness of our new results. Finally, we presented
two extensions of our analysis, including Gegenbauer-Gauss-Lobatto
(GGL) functions
$\{\phi_1^{\mathrm{GGL}}(x),\phi_2^{\mathrm{GGL}}(x),\ldots\}$ and
optimal convergence rates of Legendre-Gauss-Lobatto interpolation
and differentiation for analytic functions.

In future work, we will explore the extensions of the current study
to a more general setting, such as finding some new upper bounds for
the weighted Jacobi polynomials (see, e.g., \cite{Krasikov2007}) and
establishing some sharp error bounds for Gegenbauer approximations
of differentiable functions.

\section*{Acknowledgements}
This work was supported by the National Natural Science Foundation
of China under grant 11671160. The author wishes to thank the editor
and two anonymous referees for their valuable comments on the
manuscript.

%\section*{Declarations}
%{\bf Conflict of interest} The author declares no competing
%interest.

\end{document}